\let\mylabel\label
\bmdefine{\aaa}{a}
\bmdefine{\bbb}{b}
\bmdefine{\ccc}{c}
\bmdefine{\mmm}{m}
\bmdefine{\ppp}{p}
\bmdefine{\qqq}{q}
\bmdefine{\ttt}{t}
\bmdefine{\uuu}{u}
\bmdefine{\vvv}{v}
\bmdefine{\www}{w}
\bmdefine{\eee}{e}
\bmdefine{\xxx}{x}
\bmdefine{\yyy}{y}
\bmdefine{\zzz}{z}
\bmdefine{\zerovec}{0}
\bmdefine{\onevec}{1}
\newcommand{\CCC}{\mathbb{C}}
\newcommand{\RRR}{\mathbb{R}}
\newcommand{\FFF}{\mathbb{F}}
\newcommand{\KKK}{\mathbb{K}}
\newcommand{\define}{\mathrel{:=}}
\newcommand{\rank}{\mathrm{rank}}
\newcommand{\diag}{{\mathrm{Diag}}}
\newcommand{\glin}{{\mathrm{GL}}}
\newcommand{\image}{{\mathrm{Im}}}
\newcommand{\dom}{{\mathrm{dom}}}
\newcommand{\trank}{{\mathrm{typical\_rank}}}
\newcommand{\grank}{{\mathrm{generic\_rank}}}
\newcommand{\crank}{{\mathrm{column\_rank}}}
\newcommand{\rrank}{{\mathrm{row\_rank}}}
\newcommand{\afr}{{absolutely fullrank}}
\newcommand{\afcr}{{absolutely full column rank}}
\newcommand{\Afcr}{{Absolutely full column rank}}
\newcommand{\ans}{{absolutely nonsingular}}
\newcommand{\hr}{{Hurwitz-Radon}}
\newcommand{\transpose}{^\top}
\newcounter{condition}
\numberwithin{condition}{section}
\numberwithin{equation}{section}
\newtheorem{thm}{Theorem}
\newtheorem{example}[thm]{Example}
\newtheorem{lemma}[thm]{Lemma}
\newtheorem{cor}[thm]{Corollary}
\newtheorem{definition}[thm]{Definition}
\newtheorem{prop}[thm]{Proposition}
\newtheorem{remark}[thm]{Remark}
\numberwithin{thm}{section}
\newcommand{\bigzerou}{\smash{\lower1.7ex\hbox{\bg 0}}}
\newcommand{\bigastu}{\smash{\lower1.7ex\hbox{\bg *}}}
\newcounter{lastenumi}
\newcommand{\mysloppy}{\tolerance 9999 \hfuzz .5\p@ \vfuzz .5\p@}
\title{%
Typical ranks of certain 3-tensors and \afcr\ tensors%
\footnote{%
The authors were supported partially by Grant-in-Aid for Scientific Research (B)
(No. 20340021) of the Japan Society for the Promotion of Science.}%
}
\author{Mitsuhiro Miyazaki\footnote{Kyoto University of Education, \tt g53448@kyokyo-u.ac.jp}
 \and Toshio Sumi\footnote{Kyushu University, \tt sumi@artsci.kyushu-u.ac.jp}
 \and Toshio Sakata\footnote{Kyushu University, \tt sakata@design.kyushu-u.ac.jp}%
}
\date{Version of \today}
\date{}
\begin{document}
\mysloppy

\maketitle

\begin{abstract}
In this paper, we study typical ranks of 3-tensors
and show that there are plural typical ranks for
$m\times n\times p$ tensors over $\RRR$ in the
following cases:
(1)
$3\leq m\leq \rho(n)$ and $(m-1)(n-1)+1\leq p\leq (m-1)n$,
where $\rho$ is the \hr\ function,
(2)
$m=3$, $n\equiv 3\pmod 4$ and $p=2n-1$,
(3)
$m=4$, $n\equiv 2\pmod 4$, $n\geq 6$ and $p=3n-2$,
(4)
$m=6$, $n\equiv 4\pmod 8$, $n\geq 12$ and $p=5n-4$.
(5)
$m=10$, $n\equiv 24\pmod{32}$ and $p=9n-8$.
\end{abstract}

\section{Introduction}

A tensor is another name for a high-dimensional array of datum.
Recently we have witnessed many applications of tensor data in broad 
fields
such as brain wave analysis, image analysis, web analysis and more.

Given a $k$-dimensional tensor $T=(t_{i_1 i_2 \cdots i_k})$ of size
$n_1\times \cdots\times n_k$ with entries in a field $\KKK$, we 
identify it with the
element $x\in \KKK^{n_1}\otimes\cdots\otimes\KKK^{n_k}$ such that
$x=\sum_{i_1=1}^{n_1}\cdots\sum_{i_k=1}^{n_k}t_{i_1\cdots i_k}\eee_{i_1}
\otimes\cdots\otimes\eee_{i_k}$,
where $\eee_i$ is the $i$-th fundamental vector.
Therefore
$x$ can be expressed as a sum of finite tensors of
form $\aaa_1\otimes\cdots\otimes\aaa_k$.
The rank of $x$ is the smallest number of the tensors of the form
$\aaa_1\otimes\cdots\otimes\aaa_k$ needed to express $x$ as their sum
of them.
In terms of high-dimensional array data,
 $T=(t_{i_1\cdots i_k})$
is identified with
$\aaa_1\otimes \cdots\otimes\aaa_k$
if and only if 
$t_{i_1\cdots i_k}=\prod_{j=1}^k a_{i_j}^{(j)}$,
where $\aaa_j=(a_1^{(j)},\ldots, a_{n_j}^{(j)})\transpose$ for $j=1$, \ldots, $k$.
Therefore, the rank of a 
tensor is a measure of its complexity.

So it is worth studying the maximal rank of tensors of a given size.
It is also important to know the ranks which appear with 
positive probability when the entries of a tensor with fixed
size vary randomly.
These ranks are called the typical ranks.
See for example \cite{Berge 2000}, 
\cite{Berge 2004}, \cite{Berge 2011}
and \cite{cbdc}.

In this paper, we consider typical ranks of 3-tensors, i.e.,
3-dimensional arrays of data. 
In particular we show the following fact
(see Theorems \ref{thm:ans case} and \ref{thm:misc cases}).
\begin{thm}
\mylabel{thm:intro}
There are at least two typical ranks of $m\times n\times p$ tensors
over $\RRR$ in the following cases.
\begin{enumerate}
\item
\mylabel{item:rho n}
$3\leq m\leq \rho(n)$ and $(m-1)(n-1)+1\leq p\leq (m-1)n$,
where $\rho$ is the \hr\ function.
\item
$m=3$, $n\equiv 3\pmod 4$ and $p=2n-1$.
\item
$m=4$, $n\equiv 2\pmod 4$, $n\geq 6$ and $p=3n-2$.
\item
$m=6$, $n\equiv 4\pmod 8$, $n\geq 12$ and $p=5n-4$.
\item
$m=10$, $n\equiv 24\pmod {32}$ and $p=9n-8$.
\end{enumerate}
\end{thm}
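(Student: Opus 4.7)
The general strategy is to exhibit, in each case of the theorem, a nonempty Euclidean open set $U$ of real $m\times n\times p$ tensors whose real rank strictly exceeds the complex generic rank $g$. Since $g$ is always a typical rank over $\RRR$, such $U$ immediately supplies a further typical rank, proving plurality. The device to produce $U$ is the \afcrnoun\ introduced in this paper: if a slice tuple $(B_1,\dots,B_m)$ of $n\times p$ real matrices has this property, then the real rank of the corresponding tensor $T=\sum_i \eee_i\otimes B_i$ is forced above $g$ and is locally constant on a Euclidean neighbourhood of $T$. The proof of Theorem \ref{thm:intro} thus reduces to (a) verifying that the target rank is strictly larger than $g$ in each range, and (b) producing a tuple with the \afcrnoun\ under each set of arithmetic hypotheses.

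For case (1), where $3\le m\le\rho(n)$, I would invoke the Hurwitz--Radon theorem directly: it yields $m$ real $n\times n$ matrices $A_1,\dots,A_m$ that are absolutely non-singular, i.e., $\sum c_iA_i\in\GL_n(\RRR)$ for every non-zero $c\in\RRR^m$. One then extends these to $n\times p$ matrices $B_1,\dots,B_m$ having the \afcrnoun\ throughout the range $(m-1)(n-1)+1\le p\le (m-1)n$ by appending columns using the same absolute non-singularity framework applied blockwise, so that each nonzero real combination of the $B_i$ retains full column rank. The existence of the required tuple is the content of Theorem \ref{thm:ans case}, which combined with the rank inequality against $g$ finishes this case.

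For cases (2)--(5), the parameter $m$ just exceeds $\rho(n)$ by one or two, so Hurwitz--Radon alone cannot supply the required $m$-th matrix. The plan is to hand-craft the extra matrix using the Clifford-algebra representation theory that underlies the Hurwitz--Radon bound, together with the specific congruence on $n$ in each case; those congruences are dictated precisely by the $2$-adic dimension count that governs whether the extension is possible. I expect the main obstacle to lie in case (5), with $m=10$ and $n\equiv 24\pmod{32}$: here one must combine Clifford periodicity modulo $8$ (the source of the eightfold growth in $\rho$) with the factor $3$ appearing in $n=32k+24$, and exhibit a tenth matrix whose span retains the absolute full column rank property. Theorem \ref{thm:misc cases} encapsulates this delicate construction for cases (2)--(5) and, together with the open-set argument, completes the proof of Theorem \ref{thm:intro}.
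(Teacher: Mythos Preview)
Your high-level plan (find a Euclidean open set of tensors with real rank strictly above the complex generic rank $p$, and invoke that $p$ is always the minimal typical rank) is exactly the paper's strategy, and your pointers to Theorems \ref{thm:ans case} and \ref{thm:misc cases} are correct.  However, the mechanism you describe linking the \afcrnoun\ to the rank bound is not the one that actually works, and this is a genuine gap.

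You propose to make the $m$ frontal slices $B_1,\dots,B_m\in\RRR^{n\times p}$ of $T$ itself \afr.  But in every case of the theorem $p>n$, so no nonzero $n\times p$ matrix can have full column rank; and if you mean full \emph{row} rank, that condition alone does not force $\rank_\RRR T>p$.  The paper never imposes an \afr\ condition on the slices of $T$.  Instead, the \afr\ object is an auxiliary $(n+l)\times n\times m$ tensor (Condition \ref{cond:sp afr}, with $l=(m-1)n-p$), and the passage from its existence to an open set of $n\times p\times m$ tensors of rank $>p$ is the content of Theorem \ref{thm:main m}.  That argument is not a direct rank inequality: one first normalises a generic $T$ via rational maps so that $m-1$ of its slices are standard, then assumes a rank-$p$ decomposition $Z_k=PD_kQ$, and finally uses the \afr\ property of a derived $2n\times n\times m$ tensor to produce a nonzero vector in the kernel of a full-column-rank matrix.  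Your proposal skips this entire step, and ``appending columns blockwise'' to Hurwitz--Radon matrices does not supply it.

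Your description of cases (2)--(5) also mislocates the difficulty.  It is not that $m$ exceeds $\rho(n)$ and one must manufacture an extra Clifford element for size $n$.  Rather, one passes to $n+l$ (with $l=m-2$ here), where the congruence hypothesis guarantees $m\le\rho(n+l)$, builds an \ans\ $(n+l)\times(n+l)\times m$ tensor from a Hurwitz--Radon family with carefully chosen block structure, and then \emph{deletes} $l$ columns.  The congruences on $n$ and the specific tensor-product choices (e.g.\ $Q\otimes A$, $Q\otimes E_2\otimes L_i$) are arranged so that after truncation the last $l$ rows of slices $3,\dots,m$ vanish, which is exactly Condition \ref{cond:sp afr}.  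No ``tenth matrix beyond Hurwitz--Radon'' is ever constructed.
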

The case where $p=(m-1)n$ of \ref{item:rho n}
are already proved in \cite{ssm}.
Note that if $m\leq n$ and $p\geq (m-1)n+1$, then $\min\{p,mn\}$ is the
unique typical rank \cite{Berge 2000}.
Note also that $\min\{p, mn\}$ is the minimal typical rank
if and only if $p \geq (m-1)(n-1)+1$ \cite{cgg}.
In particular, 
in any case of Theorem \ref{thm:intro},
$p$ is the minimal typical rank.

In order to prove this theorem,
we introduce the concept of \afcr\ tensors.
It is a generalization of absolutely nonsingular tensors defined in \cite{ssm}.

\section{Preliminaries}

We first recall some basic facts and establish terminology.

\begin{notation}
\begin{enumerate}
\item We denote by $\KKK$ an arbitrary field and by $\FFF$ 
the real number field $\RRR$ or the complex number field $\CCC$.

\item We denote by $E_n$ the $n\times n$ identity matrix.

\item
For a tensor $x\in\KKK^m\otimes\KKK^n\otimes\KKK^p$ with
$x=\sum_{ijk}a_{ijk}\eee_i\otimes\eee_j\otimes\eee_k$,
we identify $x$ with
$T=(a_{ijk})_{1\leq i\leq m,1\leq j\leq n,1\leq k\leq p}$
and denote it
$(A_1;\cdots;A_p)$,
where $A_k=(a_{ijk})_{1\leq i\leq m,1\leq j\leq n}$ for $k=1$, \ldots, $p$ 
is an $m\times n$ matrix,
and call
$(A_1;\cdots;A_p)$ a tensor.

\item
We denote the set of $m\times n\times p$ tensors by
$\KKK^{m\times n\times p}$.

\item
For an $m\times n\times p$ tensor $T=(A_1;\cdots;A_p)$,
an 
$l\times m$ matrix $P$
and
an 
$n\times k$ matrix $Q$,
we denote by $PTQ$ the $l\times k\times p$ tensor
$(PA_1Q;\cdots;PA_pQ)$. 

\item
For $m\times n$ matrices $A_1$, \ldots, $A_p$,
we denote by $(A_1,  \ldots, A_p)$ the $m\times np$ matrix obtained by aligning
$A_1$, \ldots, $A_p$ horizontally.

\item
We set $\diag(A_1,A_2,\ldots,A_t)=
\begin{pmatrix} A_1&&&O \\ 
  & A_2\cr &&\ddots \\
  O &&& A_t \end{pmatrix}
$ 
for matrices $A_1$, $A_2$, \ldots, $A_t$ and
define $\diag(T_1,T_2,\ldots,T_t)$ similarly for tensors $T_1$, $T_2$, \ldots, $T_t$
with the same number of slices.

\item
For an $m\times n$ matrix $M$,
we denote the $m\times j$ 
(resp.\ $m\times (n-j)$)
matrix consisting of the first $j$ 
(resp.\ last $n-j$)
columns of $M$
by $M_{\leq j}$
(resp.\ ${}_{j<}M$).
We denote the $i\times n$ 
(resp.\ $(m-i)\times n$)
matrix consisting of the first $i$ 
(resp.\ last $m-i$)
rows of $M$
by $M^{\leq i}$
(resp.\ ${}^{i<}M$).
\item
For an $m\times n\times p$ tensor $T=(A_1;\cdots;A_p)$,
we set
$T_{\leq j}\define((A_1)_{\leq j};\cdots;(A_p)_{\leq j})$.
\item
For an $m \times n$ matrix $A=(a_{ij})$ and an $s\times t$ matrix $B$,
we denote the $ms\times nt$ matrix 
$$
\begin{pmatrix}
a_{11}B& \cdots & a_{1n}B\\
\vdots&&\vdots\\
a_{m1}B&\cdots&a_{mn}B
\end{pmatrix}
$$
by $A\otimes B$.
\item
Let $V$ and $W$ be algebraic varieties.
For a rational map $\varphi\colon V\hbox{$-$\,$-$\,$\to$}W$,
we denote the domain of $\varphi$ by $\dom(\varphi)$.
\end{enumerate}
\end{notation}

\begin{definition}\rm
Let $x$ be an element of $\KKK^m\otimes\KKK^n\otimes\KKK^p$.
We define the rank of $x$, denoted by $\rank\,x$, to be
$\min\{r\mid\exists \aaa_i\in\KKK^m$, $\exists\bbb_i\in\KKK^n$, $\exists\ccc_i\in\KKK^p$ for 
$i=1$, \ldots, $r$ such that 
$x=\sum_{i=1}^r\aaa_i\otimes\bbb_i\otimes\ccc_i\}$.
\end{definition}
If $\KKK'$ is an extension field of $\KKK$ and 
$x\in \KKK^m\otimes \KKK^n\otimes \KKK^p$, 
then we may regard $x$ 
as an element of ${\KKK'}^m\otimes{\KKK'}^n\otimes {\KKK'}^p$.
In order to distinguish the rank of $x$ as a tensor over $\KKK$ and the rank of $x$ as a tensor over $\KKK'$, we denote by $\rank_{\KKK}x$ and $\rank_{\KKK'}x$
respectively if necessary.

\begin{example}[{\cite[Example 2.9]{sms}}] 
\rm
\mylabel{eg:field}
For a $2\times 2$ matrix $A=\begin{pmatrix} 0&1\cr -1&0\end{pmatrix}$,
it holds that $\rank_{\RRR}(E_2;A)=3$ and $\rank_{\CCC}(E_2;A)=2$.
\end{example}

\begin{definition}\rm
Let $T=(A_1;\cdots;A_p)$ be a tensor.
We define 
the column rank, denoted $\crank T$, and the row rank,
denoted $\rrank T$, of $T$ by
$\crank T\define\rank\begin{pmatrix}A_1\\\vdots\\A_p\end{pmatrix}$
and $\rrank T\define\rank(A_1,\ldots, A_p)$ respectively.
\end{definition}

\begin{remark}\rm
\mylabel{rem:row column rank}
Let $T$ be a tensor.
Then $\rank T\geq \max\{\crank T$, $\rrank T\}$.
\end{remark}

\begin{definition}\rm
Two tensors $T$ and $T'$ are said to be {\em equivalent} if
there are nonsingular matrices $P$ and $Q$ such that $T'=PTQ$.
\end{definition}

\begin{remark}\rm
If $T$ and $T'$ are equivalent, then
$\rank T=\rank T'$.
\end{remark}

\begin{definition}\rm
Let $m$, $n$ and $p$ be positive integers.
If a generic $m\times n\times p$ tensor over $\FFF$ has rank $r$,
that is,
there is a Zariski dense open subset $U$ of $\FFF^{m\times n\times p}$ such that
$\rank T=r$ for any $T\in U$,
we say that the generic rank of  $m\times n\times p$ tensors over $\FFF$ is
$r$ and denote $\grank_\FFF(m,n,p)=r$.
\end{definition}
\begin{definition}\rm
We set
$\trank_\FFF(m,n,p)=\{r\mid$ there is a subset $S\subset\FFF^{m\times n\times p}$ such that
$S$ has positive Lebesgue measure and $\rank_\FFF T=r$ for any $T\in S\}$
and we call an element of $\trank_\FFF(m,n,p)$ a typical rank of $m\times n\times p$ tensors
over $\FFF$.
\end{definition}
We set $f_1\colon \FFF^m\times \FFF^n\times \FFF^p\to\FFF^{m\times n\times p}$
by
$$((x_1,\ldots, x_m)\transpose, (y_1,\ldots,y_n)\transpose, (z_1,\ldots, z_p)\transpose)
\mapsto (x_iy_jz_k)$$
and for $t>1$, we set
$f_t\colon (\FFF^m\times \FFF^n\times \FFF^p)^t\to \FFF^{m\times n\times p}$
by
$$((\xxx_1,\yyy_1,\zzz_1),\ldots,(\xxx_t,\yyy_t,\zzz_t))\mapsto
\sum_{u=1}^t f_1(\xxx_u,\yyy_u,\zzz_u).$$
Then for $T\in \FFF^{m\times n\times p}$,
$\rank T=\min\{t\mid T\in \image f_t\}$.
\begin{remark}\rm
\mylabel{rem:chevalley}
Consider the case where $\FFF=\CCC$.
Then by the theorem of Chevalley 
\cite{che} (see also \cite[Theorem 3.16]{har} or \cite[(2.31) Proposition]{mum}), 
$\image f_t$ is a 
constructible set of $\CCC^{m\times n\times p}$.
Therefore, the following conditions are equivalent.
\begin{enumerate}
\item
The Zariski closure of $\image f_t$ is $\CCC^{m\times n\times p}$.
\item
$\image f_t$ contains a Zariski dense open subset of $\CCC^{m\times n\times p}$.
\item
The Euclidean closure of $\image f_t$ is $\CCC^{m\times n\times p}$.
\end{enumerate}
In particular,
$\min\{t\mid$ the Euclidean closure of $\image f_t$ is $\CCC^{m\times n\times p}\}$
is the generic rank of $m\times n\times p$ tensors over $\CCC$.
\end{remark}

Here we recall the following result of Friedland.
\begin{thm}[{\cite[Theorem~7.1]{Friedland:2008}}] \label{thm:Friedland}
The space $\RRR^{m_1\times m_2\times m_3}$, $m_1,m_2,m_3 \in\mathbb{N}$, 
contains a finite number of open connected disjoint semi-algebraic sets 
$O_1,\ldots,O_M$ satisfying the following properties.
\begin{enumerate}
\item $\RRR^{m_1\times m_2\times m_3}\smallsetminus \cup_{i=1}^M O_i$
is a closed semi-algebraic set $\RRR^{m_1\times m_2\times m_3}$ 
of dimension less than $m_1m_2m_3$.
\item  Each $T \in O_i$ has rank $r_i$ for $i = 1,\ldots,M$.
\item The number $\min(r_1,\ldots,r_M)$ is equal to $\grank_\CCC(m_1,m_2,m_3)$.
\item $\max(r_1,\ldots,r_M)$ is the minimal 
$t\in \mathbb{N}$ such that the Euclidean 
closure of $f_t((\RRR^{m_1}\times \RRR^{m_2}\times \RRR^{m_3})^t)$ 
is equal to $\RRR^{m_1\times m_2\times m_3}$.
\item For each integer $r\in [\grank_\CCC(m_1,m_2,m_3),\max(r_1, \ldots, r_M)]$,
there exists $r_i=r$ for some integer $i\in [1,M]$.
\end{enumerate}
\end{thm}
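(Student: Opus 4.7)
The plan is to organize the proof around the filtration $R_t := \image f_t \subseteq \RRR^{m_1\times m_2\times m_3}$ by using real semi-algebraic geometry and the Minkowski-sum structure $R_{t+1}=R_t+R_1$. First, since $R_t$ is the image of a polynomial map on $(\RRR^{m_1+m_2+m_3})^{t}$, the Tarski--Seidenberg theorem ensures each $R_t$ is semi-algebraic, and hence so is the layer $S_t:=R_t\setminus R_{t-1}$ of tensors of rank exactly $t$ (with $R_0=\{\zerovec\}$). Taking a cylindrical algebraic decomposition (or any Whitney-type semi-algebraic stratification) of $\RRR^{m_1\times m_2\times m_3}$ compatible with the chain $R_1\subseteq R_2\subseteq\cdots\subseteq R_{m_1m_2m_3}$, the top-dimensional open cells become connected open semi-algebraic sets on which the rank is constant; these serve as $O_1,\dots,O_M$, and their complement is automatically a closed semi-algebraic set of dimension $<m_1m_2m_3$. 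This yields items (1) and (2).

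For item (3), Remark~\ref{rem:chevalley} identifies $\grank_\CCC(m_1,m_2,m_3)$ as the least $t$ for which the complex Zariski closure of $\image f_t$ is all of $\CCC^{m_1\times m_2\times m_3}$. For smaller $t$, that closure is a proper complex variety, so its real locus has dimension $<m_1m_2m_3$; hence $R_t$ has empty interior, forcing $r_i\geq\grank_\CCC$ for every $i$. At $t=\grank_\CCC$, the complexified differential $df_t$ is surjective on a dense set; one selects a real preimage at which the real differential is full-rank and applies the implicit function theorem to produce a real Euclidean open subset of $R_t$, giving some $r_i=\grank_\CCC$. For item (4), let $T_{\max}$ denote the right-hand side. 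Since $\bigcup_iO_i$ is a dense open set contained in $R_{\max r_i}$, its Euclidean closure is the whole space, so $T_{\max}\leq \max r_i$; conversely, any $O_j$ with $r_j>t$ is disjoint from $R_t$, which prevents $R_t$ from being Euclidean-dense whenever $t<\max r_i$, giving the reverse inequality.

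The main obstacle is item (5): one must show that the set $\{r_1,\dots,r_M\}$ contains every integer in $[\grank_\CCC,T_{\max}]$. The key lemma I would prove is a strict-growth statement: if $\overline{R_t}\subsetneq\RRR^{m_1\times m_2\times m_3}$, then $\overline{R_{t+1}}\supsetneq \overline{R_t}$. Suppose for contradiction $\overline{R_{t+1}}=\overline{R_t}$. Because $R_{t+1}=R_t+R_1$ with $R_1$ the affine cone of rank-$\leq 1$ tensors, every $v\in R_t$ and rank-$1$ tensor $w$ satisfy $v+w\in R_{t+1}\subseteq\overline{R_t}$; by continuity, $\overline{R_t}+w\subseteq\overline{R_t}$ for every rank-$1$ $w$. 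Iterating, $\overline{R_t}$ is closed under adding arbitrary finite-rank tensors, hence $\overline{R_t}=\RRR^{m_1\times m_2\times m_3}$, contradiction. Combined with items (3) and (4), the closures $\overline{R_t}$ strictly grow as full-dimensional semi-algebraic sets at each step $t\in[\grank_\CCC,T_{\max}]$. For such a $t$, the difference $\overline{R_t}\setminus\overline{R_{t-1}}$ then contains a nonempty full-dimensional open subset of the ambient space; every tensor in that open set has rank exactly $t$ (it lies in $R_t$ but outside $\overline{R_{t-1}}\supseteq R_{t-1}$), so it meets some stratum $O_i$, forcing $r_i=t$.

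The delicate point is the last step of (5): extracting an open rank-$t$ stratum from a merely strict inclusion of closures. This requires invoking the dimension theory of semi-algebraic sets to rule out that $\overline{R_t}\setminus\overline{R_{t-1}}$ is contained in a lower-dimensional boundary piece, and using compatibility of the initial stratification with the entire chain $(R_t)$ so that any full-dimensional open subset of $\overline{R_t}\setminus\overline{R_{t-1}}$ must in fact be a union of strata $O_i$. Once this is handled, items (1)--(5) all follow from the same cell-decomposition setup combined with the strict-growth lemma.
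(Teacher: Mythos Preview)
This theorem is not proved in the paper; it is quoted from \cite{Friedland:2008} and used as a black box, so there is no in-paper proof to compare against. Your outline for items (1)--(4) follows the standard semi-algebraic route and is essentially correct.

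The genuine gap is in item (5). Your strict-growth lemma for closures is valid: if $\overline{R_t}\neq\RRR^{m_1m_2m_3}$ then $\overline{R_t}\subsetneq\overline{R_{t+1}}$. But from a strict inclusion $\overline{R_{t-1}}\subsetneq\overline{R_t}$ of two closed full-dimensional semi-algebraic sets you cannot conclude that $\overline{R_t}\setminus\overline{R_{t-1}}$ contains a Euclidean-open subset of the ambient space; the difference may sit entirely inside the lower-dimensional boundary $\partial\overline{R_t}$. You flag this as ``delicate'' and invoke ``dimension theory of semi-algebraic sets'', but general dimension theory does not exclude this possibility, and you never supply an argument that does. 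There is a second gap in the same line: even granting an open $U\subseteq\overline{R_t}\setminus\overline{R_{t-1}}$, you assert that every $T\in U$ ``lies in $R_t$''. This is unjustified, since $\overline{R_t}\setminus R_t$ is typically nonempty (border-rank phenomena) and tensors there have rank strictly greater than $t$; an open piece of $\overline{R_t}$ need not consist of rank-$\leq t$ tensors at all. A correct proof of (5) must exhibit, for each $t$ in the range, a nonempty open subset of $R_t\setminus R_{t-1}$ itself, which is strictly more than what your closure-growth lemma delivers.
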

Therefore, we see the following fact.
\begin{prop} 
\mylabel{prop:open}
Let $r$ be a positive integer.
Then $r\in\trank_\RRR(m_1$, $m_2$, $m_3)$ if and only if there is a 
non-empty Euclidean open subset
$U$ of $\RRR^{m_1\times m_2\times m_3}$ such that
for any $T\in U$, $\rank T=r$.
\end{prop}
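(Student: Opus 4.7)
The plan is to apply Friedland's theorem (Theorem~\ref{thm:Friedland}) almost directly. Both implications reduce to elementary measure-theoretic facts once the Friedland decomposition is in hand.

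For the ``if'' direction, I would simply observe that a non-empty Euclidean open subset $U$ of $\RRR^{m_1\times m_2\times m_3}$ has positive Lebesgue measure; taking $S=U$ in the definition of $\trank_\RRR(m_1,m_2,m_3)$ then gives $r\in\trank_\RRR(m_1,m_2,m_3)$.

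For the ``only if'' direction, suppose $r\in\trank_\RRR(m_1,m_2,m_3)$, so there exists a set $S\subset\RRR^{m_1\times m_2\times m_3}$ of positive Lebesgue measure on which the rank is constantly $r$. Write $\RRR^{m_1\times m_2\times m_3}=(\bigcup_{i=1}^M O_i)\cup Z$, where the $O_i$ are the open connected semi-algebraic sets from Theorem~\ref{thm:Friedland} and $Z$ is the remainder. By part~(1) of Theorem~\ref{thm:Friedland}, $Z$ is a closed semi-algebraic set of dimension strictly less than $m_1m_2m_3$, hence has Lebesgue measure zero. Therefore $S\setminus Z$ still has positive measure, and since $S\setminus Z\subset\bigcup_{i=1}^M O_i$ is a finite union, there exists some index $i$ with $S\cap O_i$ of positive measure.

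Finally, by part~(2) of Theorem~\ref{thm:Friedland}, every tensor in $O_i$ has rank $r_i$, while every tensor in $S$ has rank $r$. Since $S\cap O_i$ is non-empty (having positive measure), we conclude $r_i=r$, so $U\define O_i$ is a non-empty Euclidean open subset on which the rank is constantly $r$. There is no substantive obstacle here; the argument is a short bookkeeping exercise on top of Friedland's decomposition, the only point to be careful about being that the ``bad'' set $Z$ is genuinely null, which follows from its lower dimension as a semi-algebraic set.
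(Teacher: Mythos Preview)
Your proof is correct and follows essentially the same approach as the paper: both use Friedland's decomposition, note that the complement of $\bigcup_i O_i$ is a lower-dimensional semi-algebraic set (hence null), and conclude that $S$ must meet some $O_i$, forcing $r_i=r$. Your version is slightly more explicit about the measure-theoretic step, but the idea is identical.
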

\begin{proof}
``If'' part is immediate from the definition of typical rank.
Assume that $r$ is a typical rank.
Then there is 
a subset $S\subset\RRR^{m_1\times m_2\times m_3}$ such that
$S$ has positive Lebesgue measure and $\rank T=r$ for any $T\in S$.
Since
$\dim(\RRR^{m_1\times m_2\times m_3}\smallsetminus \cup_{i=1}^M O_i)<m_1m_2m_3$,
there is $i$ such that $S\cap O_i\neq\emptyset$.
Therefore, $O_i$ is a non-empty Euclidean open set 
such that $\rank T=r$ for any $T\in O_i$.
\end{proof}
In particular, we see the following:
\begin{remark}\rm
If there is the generic rank of $m\times n\times p$ tensors over $\FFF$, then it is the
unique typical rank of $m\times n\times p$ tensors over $\FFF$.
\end{remark}

\begin{remark}\rm
\mylabel{rem:grankc}
It is known that if
$p\geq (m-1)(n-1)+1$, then
$\grank_\CCC(m,n,p)=\min\{p,mn\}$
(cf. 
\cite[Theorem 2.4 and Remark 2.5]{cgg}).
\end{remark}

\section{\Afcr\ tensors}

First we recall the following definition.

\begin{definition}[{\cite{ssm}}]
\rm
Let
$T=(A_1;\cdots;A_p)$
be an $n\times n\times p$ tensor over $\RRR$.
$T$ is called an absolutely nonsingular tensor if the equation
$$
\det(\sum_{k=1}^p x_k A_k)=0
$$
implies $x_1=x_2=\cdots=x_p=0$.
\end{definition}
We generalize this notion and state the following:

\begin{definition}
\rm
\mylabel{def:afr}
Let
$T=(A_1;\cdots;A_p)$
be an $l\times n\times p$ tensor over $\RRR$.
$T$ is called an \afcr\ tensor or simply an \afr\ tensor if
$$
\rank(\sum_{k=1}^p x_k A_k)=n
$$
for any $(x_1,x_2,\ldots,x_p)\in\RRR^p\setminus\{(0,0,\ldots,0)\}$.
\end{definition}
It follows from the definition
that a tensor which is equivalent to an \afr\ tensor is also \afr.
Next we note the following lemma whose proof is 
straightforward.
\begin{lemma}
\mylabel{lem:char afr}
Let
$T=(A_1;\cdots;A_p)$
be an $l\times n\times p$ tensor over $\RRR$.
Set
$A_i=(\aaa_{i1},\ldots,\aaa_{in})$
for $i=1$, \ldots, $p$.
Then the following conditions are equivalent.
\begin{enumerate}
\item
$T$ is \afr.
\item
If
$
\sum_{i=1}^p\sum_{j=1}^n x_iy_j\aaa_{ij}=\zerovec
$,
where $x_1$, \ldots, $x_p$, $y_1$, \ldots, $y_n\in\RRR$,
then $x_1=\cdots=x_p=0$ or $y_1=\cdots=y_n=0$.
\item
$
\sum_{i=1}^p x_iA_i\yyy\neq \zerovec
$
for any $\xxx=(x_1,\ldots, x_p)\transpose\in S^{p-1}$ and
$\yyy\in S^{n-1}$,
where $S^d$ stands for the $d$-dimensional sphere.
\end{enumerate}
\end{lemma}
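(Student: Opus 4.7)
The plan is to prove the cyclic chain of implications $(1)\Rightarrow(3)\Rightarrow(2)\Rightarrow(1)$ by directly unpacking the definitions. The single observation that drives everything is that
$$
\sum_{i=1}^p\sum_{j=1}^n x_i y_j \aaa_{ij} \;=\; \Bigl(\sum_{i=1}^p x_i A_i\Bigr)\yyy,
$$
where $\yyy=(y_1,\ldots,y_n)\transpose$. Thus condition (2) is a bilinear reformulation of the vanishing of this product, while (3) restricts attention to unit vectors, and (1) is the full-column-rank statement about $\sum_i x_i A_i$.

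First I would prove $(1)\Rightarrow(3)$: given $\xxx\in S^{p-1}$ and $\yyy\in S^{n-1}$, the matrix $M\define\sum_i x_i A_i$ has rank $n$ by (1) since $\xxx\neq\zerovec$, hence its columns are linearly independent and $M\yyy\neq\zerovec$ because $\yyy\neq\zerovec$. Next, $(3)\Rightarrow(2)$ follows by contraposition and homogeneity: if the bilinear sum vanishes for some $(x_1,\ldots,x_p)$ and $(y_1,\ldots,y_n)$ both nonzero, then rescaling the two tuples to unit length (which preserves the vanishing, as the expression is bilinear) produces unit vectors contradicting (3).

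Finally $(2)\Rightarrow(1)$ is again contrapositive: if $T$ is not \afr, then some nonzero $\xxx=(x_1,\ldots,x_p)$ makes $\sum_i x_i A_i$ have rank strictly less than $n$, so its columns are linearly dependent, yielding a nonzero $\yyy=(y_1,\ldots,y_n)\transpose$ with $(\sum_i x_i A_i)\yyy=\zerovec$. Rewriting this as $\sum_{i,j} x_i y_j \aaa_{ij}=\zerovec$ with neither tuple zero contradicts (2).

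There is no real obstacle here; the lemma is essentially a translation exercise among three equivalent viewpoints on the same bilinear form, and the write-up should be a few lines per implication. The only minor care needed is to justify the rescaling step in $(3)\Rightarrow(2)$, which is immediate from bilinearity of the expression in $\xxx$ and $\yyy$ separately.
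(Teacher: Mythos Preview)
Your proposal is correct and is exactly the straightforward unpacking of definitions the paper has in mind; the paper itself does not write out a proof, stating only that the lemma's proof is ``straightforward.'' Your cyclic chain $(1)\Rightarrow(3)\Rightarrow(2)\Rightarrow(1)$, driven by the identity $\sum_{i,j}x_iy_j\aaa_{ij}=\bigl(\sum_i x_iA_i\bigr)\yyy$ together with the obvious homogeneity, is precisely the intended argument.
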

As a corollary, we see that a tensor obtained by rotating
an \afr\  tensor by $90^\circ$ is also \afr.
To be precise, we see the following fact.
\begin{cor}
\mylabel{cor:afr roll}
Let
$T=(A_1;\cdots;A_p)$
be an $l\times n\times p$ tensor over $\RRR$.
Set
$A_i=(\aaa_{i1},\ldots,\aaa_{in})$
for $i=1$, \ldots, $p$.
And set
$B_j=(\aaa_{pj},\aaa_{p-1,j},\ldots,\aaa_{1j})$
for $j=1$, \ldots, $n$
and 
$T'=(B_1;\cdots;B_n)$.
Then $T$ is \afr\ if and only if
so is $T'$.
In particular, there is an $l\times n\times p$ \afr\ tensor
if and only if there is an $l\times p\times n$ \afr\ tensor.
\end{cor}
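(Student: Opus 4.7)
The plan is to use Lemma \ref{lem:char afr}(2), which characterizes the \afr\ property purely in terms of the column vectors $\aaa_{ij}$, to reduce the statement to a bookkeeping identity between the column data of $T$ and that of $T'$. The construction of $T'$ from $T$ simply rearranges (and relabels) the same collection of column vectors, so the bilinear vanishing condition of Lemma \ref{lem:char afr}(2) should translate into itself after an obvious change of variables.

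Concretely, I would first write out the column vectors of the slices of $T'$: if $B_j=(\bbb_{j1},\ldots,\bbb_{jp})$, then by definition $\bbb_{jk}=\aaa_{p-k+1,j}$ for $1\leq j\leq n$, $1\leq k\leq p$. Applying Lemma \ref{lem:char afr}(2) to $T'$ (with the roles of $n$ and $p$ swapped, and with the $y_j$ indexing the $n$ slices of $T'$ and the $x_k$ indexing the $p$ columns of each slice) gives that $T'$ is \afr\ if and only if the relation
\[
\sum_{j=1}^n\sum_{k=1}^p y_j x_k\,\bbb_{jk}=\zerovec
\]
forces $y_1=\cdots=y_n=0$ or $x_1=\cdots=x_p=0$. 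Substituting $\bbb_{jk}=\aaa_{p-k+1,j}$ and reindexing by $i=p-k+1$, $x'_i\define x_{p-i+1}$, this relation becomes
\[
\sum_{i=1}^p\sum_{j=1}^n x'_i y_j\,\aaa_{ij}=\zerovec,
\]
which, by Lemma \ref{lem:char afr}(2) applied to $T$, is equivalent to $T$ being \afr\ (noting that $(x_1,\ldots,x_p)=\zerovec$ if and only if $(x'_1,\ldots,x'_p)=\zerovec$, since the map $x\mapsto x'$ is just the order-reversal permutation of $\RRR^p$).

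For the ``in particular'' statement, I would observe that $T'$ has format $l\times p\times n$ by construction (its $n$ slices are each $l\times p$), so the equivalence above immediately gives the existence of an $l\times p\times n$ \afr\ tensor from an $l\times n\times p$ one, and vice versa by applying the same construction to $T'$. No real obstacle arises; the only care needed is to keep the index conventions straight between the two applications of Lemma \ref{lem:char afr}(2), since the roles of ``slice index'' and ``column index'' in $T$ and $T'$ are interchanged.
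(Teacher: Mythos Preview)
Your proposal is correct and follows exactly the approach the paper intends: the corollary is stated immediately after Lemma \ref{lem:char afr} with the remark that it follows ``as a corollary,'' and your argument is precisely the direct unpacking of condition (2) of that lemma under the reindexing $\bbb_{jk}=\aaa_{p-k+1,j}$. There is nothing to add; the paper itself gives no further details beyond what you have written.
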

%
%

Now we prove the following important fact.
\begin{thm}\mylabel{thm:afr open}
Let $l$, $n$, and $p$ be positive integers.
Then the set 
$\{T\in\RRR^{l\times n\times p}\mid
T$ is \afr $\}$
is a (possibly empty) open subset of
$\RRR^{l\times n\times p}$
in the Euclidean topology.
\end{thm}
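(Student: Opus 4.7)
The plan is to reformulate the \afr\ property as the non-vanishing of a continuous real-valued function on a compact set, and then exploit compactness to derive openness in the standard way.

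By Lemma \ref{lem:char afr}(3), a tensor $T=(A_1;\cdots;A_p)$ is \afr\ if and only if the polynomial map
$$
\Psi_T\colon S^{p-1}\times S^{n-1}\to\RRR^l,\qquad\Psi_T(\xxx,\yyy)\define\sum_{k=1}^p x_kA_k\yyy,
$$
is nowhere zero. I will introduce the jointly continuous function
$$
F\colon\RRR^{l\times n\times p}\times S^{p-1}\times S^{n-1}\to\RRR,\qquad F(T,\xxx,\yyy)\define\Bigl\|\sum_{k=1}^p x_kA_k\yyy\Bigr\|,
$$
and record that $T$ is \afr\ if and only if $F(T,\cdot,\cdot)$ is strictly positive on $S^{p-1}\times S^{n-1}$.

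Next, I will fix an \afr\ tensor $T^0=(A^0_1;\cdots;A^0_p)$ and set $c\define\min_{(\xxx,\yyy)}F(T^0,\xxx,\yyy)$, which is a strictly positive minimum attained because $S^{p-1}\times S^{n-1}$ is compact. For a nearby tensor $T=(A_1;\cdots;A_p)$, the reverse triangle inequality gives
$$
|F(T,\xxx,\yyy)-F(T^0,\xxx,\yyy)|\leq\Bigl\|\sum_{k=1}^p x_k(A_k-A^0_k)\yyy\Bigr\|,
$$
and the right-hand side is bounded uniformly in $(\xxx,\yyy)\in S^{p-1}\times S^{n-1}$ by a constant multiple of $\max_k\|A_k-A^0_k\|$. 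Shrinking the Euclidean neighborhood of $T^0$ so that this bound is smaller than $c/2$ forces $F(T,\xxx,\yyy)\geq c/2>0$ everywhere on $S^{p-1}\times S^{n-1}$, whence $T$ is itself \afr.

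There is no serious obstacle: the whole argument is essentially the standard fact that the minimum of a jointly continuous function over a parameter-independent compact set depends continuously on the parameter. The only point requiring a moment's care is ensuring that the increment $F(T,\xxx,\yyy)-F(T^0,\xxx,\yyy)$ is controlled uniformly in $(\xxx,\yyy)$, which is automatic from the compactness of the unit spheres.
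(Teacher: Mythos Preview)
Your proof is correct and follows essentially the same approach as the paper: both invoke Lemma~\ref{lem:char afr}(3) and exploit the compactness of $S^{p-1}\times S^{n-1}$. The only cosmetic difference is that the paper argues by contradiction via sequential compactness (extracting convergent subsequences of witnesses $(\xxx^{(k)},\yyy^{(k)})$ to non-\afr ness), whereas you argue directly using the extreme value theorem to obtain a positive lower bound $c$ and then a quantitative neighborhood.
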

\begin{proof}
Let $T$ be an $l\times n\times p$ \afr\ tensor.
If $T$ is not an interior point of the
set in question, there is a sequence
$\{T_k\}$ of
tensors of size $l\times n\times p$
such that
$
T_k\to T
$ and
$T_k$ is not \afr\ for any $k$.

Set $T=(A_1;\cdots;A_p)$ and $T_k=(A_1^{(k)};\cdots;A_p^{(k)})$ for each $k$.
Since $T_k$ is not \afr,
we see by Lemma \ref{lem:char afr} that there are
$\xxx^{(k)}=(x_1^{(k)},\ldots,x_p^{(k)})\transpose\in S^{p-1}$ and
$\yyy^{(k)}\in S^{n-1}$ such that
$$
\sum_{i=1}^p x_i^{(k)}A_i^{(k)}\yyy^{(k)}
= \zerovec
$$
for any $k$.
Since $S^{p-1}$ and $S^{n-1}$ are compact, we may assume,
by taking subsequences if necessary, that
$\{\xxx^{(k)}\}$ and $\{\yyy^{(k)}\}$ converge.

Set
$\xxx=\lim_{k\to\infty}\xxx^{(k)}$, $\yyy=\lim_{k\to\infty}\yyy^{(k)}$
and $\xxx=(x_1,\ldots,x_p)\transpose$.
Then $\xxx\in S^{p-1}$, $\yyy\in S^{n-1}$ and
$$
\sum_{i=1}^p x_iA_i\yyy= 
\lim_{k\to\infty}\sum_{i=1}^p x_i^{(k)}A_i^{(k)}\yyy^{(k)}= \zerovec.
$$
This contradicts the fact that $T$ is \afr.
\end{proof}

\section{Typical ranks of certain 3-tensors}

In this section, we consider typical ranks of 3-tensors with 
fixed sizes with a certain condition.
First consider the following condition of a sequence of matrices:

\begin{definition}
\rm
\mylabel{cond:afr parts m}
Let $n$, $l$, and $m$ be integers with $0\leq l< n$ and $m\geq 3$.
Also let
$A$ be an $n\times (2n-l)$ matrix and
$A_3$, $A_4$, \ldots, $A_m$ $n\times n$ matrices
with entries in $\RRR$.
Set $A=(B_1$, $B_0$, $B_2)$,
where $B_1$ and $B_2$ are $n\times (n-l)$ matrices and $B_0$ is
an $n\times l$ matrix.
If
$\left(
\begin{pmatrix}B_1&O\\ B_1&B_0\end{pmatrix};
\begin{pmatrix}B_0&B_2\\ O&B_2\end{pmatrix};
\begin{pmatrix}A_3\\ A_3\end{pmatrix};\cdots;
\begin{pmatrix}A_m\\ A_m\end{pmatrix}
\right)$
is \afr, then we say that the sequence of matrices $A$, $A_3$, \ldots, 
$A_m$ satisfies Condition 
\thethm\ with respect to $n$, $l$, $m$.
\end{definition}
Using this notion, we state the following:

\begin{thm}
\mylabel{thm:main m}
Let $m$, $n$, and $p$ be positive integers with 
$m\geq 3$ and $(m-2)n<p\leq (m-1)n$.
Set $l=(m-1)n-p$.
If there is  a sequence of matrices
$A$, $A_3$, \ldots, $A_{m}$
satisfying Condition \ref{cond:afr parts m}
with respect to $n$, $l$, $m$,
then  $\trank_\RRR(n,p,m)$ contains a number 
larger than $p$.
\end{thm}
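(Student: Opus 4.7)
The plan is to construct from the data of Condition~\ref{cond:afr parts m} an explicit $n\times p\times m$ tensor $T_0$ that does not lie in the Euclidean closure $\overline{\image f_p}$ of the rank-$\le p$ locus; the complement $U=\RRR^{n\times p\times m}\setminus\overline{\image f_p}$ is then a non-empty Euclidean open set on which every tensor has rank exceeding $p$, and Proposition~\ref{prop:open} together with Theorem~\ref{thm:Friedland} immediately yields a typical rank strictly greater than $p$.

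First I would build $T_0$ explicitly from $B_1,B_0,B_2,A_3,\ldots,A_m$. Using the decomposition $p=(n-l)+(m-2)n$, each $n\times p$ slice of $T_0$ is viewed as $m-1$ horizontal blocks, one of width $n-l$ and the others of width $n$. The $m$ slices of $T_0$ are obtained by placing the $B_j$ and $A_k$ into these blocks in a pattern that mirrors the row-doubled structure of the $2n\times n\times m$ \afr\ tensor of Condition~\ref{cond:afr parts m}. The design principle is that a hypothetical rank-$p$ expression for $T_0$ would be forced, after a change of basis on the middle mode (available whenever the middle vectors span $\RRR^p$), into a rigid standard form whose column slices must all have rank at most one; this rank-one constraint translates directly into a vanishing relation for the \afr\ tensor.

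Second, I would rule out $T_0\in\overline{\image f_p}$ by contradiction. Assume $T^{(\nu)}=\sum_{i=1}^p\aaa_i^{(\nu)}\otimes\bbb_i^{(\nu)}\otimes\ccc_i^{(\nu)}$ converges to $T_0$. After rescaling so that $\bbb_i^{(\nu)}\in S^{p-1}$ and $\ccc_i^{(\nu)}\in S^{m-1}$, and passing to subsequences so that all $\bbb_i^{(\nu)}$ and $\ccc_i^{(\nu)}$ converge, one analyses the sequences $\{\|\aaa_i^{(\nu)}\|\}$. When these norms stay bounded one passes to the limit directly and extracts the rank-one condition on the column slices, producing $\sum_{k=1}^m x_k S_k\yyy=\zerovec$ with $\xxx\in S^{m-1}$ and $\yyy\in S^{n-1}$, where $(S_1;\cdots;S_m)$ denotes the \afr\ tensor of Condition~\ref{cond:afr parts m}; this contradicts Lemma~\ref{lem:char afr}~(3). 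When some norms blow up, a cancellation and compactness argument in the spirit of the proof of Theorem~\ref{thm:afr open} still produces unit vectors $\xxx,\yyy$ satisfying the same forbidden relation.

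Third, once $T_0\notin\overline{\image f_p}$, the open set $U$ is non-empty; every $T\in U$ satisfies $\rank T>p$, so by Proposition~\ref{prop:open} some element of $\trank_\RRR(n,p,m)$ exceeds $p$. The main obstacles are (a) producing the right block layout of $T_0$ in the first step, which must be compatible with both the \afr\ condition and the target dimensions $n\times p\times m$, and (b) the divergent-summand case of the second step: individual terms $\aaa_i^{(\nu)}\otimes\bbb_i^{(\nu)}\otimes\ccc_i^{(\nu)}$ can blow up in norm while cancelling in pairs, so extracting a genuine unit-norm vanishing relation for the \afr\ tensor requires careful bookkeeping of the asymptotic growth of $\|\aaa_i^{(\nu)}\|$ and of how the block structure of $T_0$ constrains the limiting summands.
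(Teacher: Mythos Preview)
Your strategy of showing that a single model tensor $T_0$ lies outside the Euclidean closure $\overline{\image f_p}$ is stronger than what is needed and, as you yourself flag in obstacle~(b), the divergent-summand case is a genuine gap. Diverging rank-one summands that cancel in the limit are exactly what makes border-rank lower bounds hard; the compactness argument behind Theorem~\ref{thm:afr open} controls perturbations of the \afr\ tensor itself, not sums of unbounded rank-one terms converging to a fixed target. There is no evident mechanism by which such a cancellation would force a relation $\sum_k x_k S_k\yyy=\zerovec$ with $\xxx,\yyy$ both nonzero, and you do not supply one. Even your bounded case is circuitous: once all $\aaa_i^{(\nu)}$ converge you have simply produced a rank-$\le p$ expression for $T_0$, so the real work is the purely algebraic claim $\rank T_0>p$, and the limiting apparatus contributes nothing.

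The paper bypasses border rank entirely. Instead of protecting one tensor from limits of rank-$p$ tensors, it builds an explicit Euclidean open set $U$ on which every tensor has rank $>p$. The key device you are missing is the rational normalization of Theorem~\ref{thm:rational map gen}: for $T=(X_1;\ldots;X_m)$ in a Zariski-open set one can apply $\psi^{(1)},\psi^{(2)}$ to bring the first $m-1$ slices to a fixed staircase form, and the \afr\ hypothesis is imposed on the normalized data (this is where Theorem~\ref{thm:afr open} is actually used, to guarantee $U$ is open). For any $T\in U$ one then argues algebraically: the staircase form forces $\crank Z=p$, so $\rank T\le p$ would imply $\rank Z=p$ exactly and hence $Z_k=PD_kQ$ with $Q$ invertible. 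Reading off $Q^{-1}$ and $V^{-1}Q^{-1}$ from the staircase slices and substituting into the last slice yields the single linear relation
\[
\sum_{k}d_{k1}\,S_k\,\uuu_1=\zerovec
\]
with $\uuu_1\neq\zerovec$ and $(d_{11},\ldots,d_{m1})\neq\zerovec$, contradicting the \afr\ property. No sequences, no closures, no divergent summands. If you want to repair your outline, replace the closure argument by this rational-normalization-plus-exact-factorization step.
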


In order to prove Theorem \ref{thm:main m}, 
we first recall our previous results.
%
%
%
\begin{thm}[{\cite[Theorem 8]{mss}}]
\label{thm:NOLTA 8}
Let $\KKK$ be an infinite field, and
$s$ and $t$ integers with $0<s<t$.
Then there are rational maps $\varphi^{(1)}$ and $\varphi^{(2)}$ 
from $\KKK^{s\times t\times 2}$ to
$\glin(s;\KKK)$ and $\glin(t;\KKK)$ respectively,
such that
\begin{eqnarray*}
&&((O,E_s);(E_s,O))\in\dom(\varphi^{(1)})\cap\dom(\varphi^{(2)}),\\
&&\varphi^{(1)}((O,E_s);(E_s;O))=E_s,\\
&&\varphi^{(2)}((O,E_s);(E_s;O))=E_t, 
\end{eqnarray*}
and 
$$
\varphi^{(1)}(T)T\varphi^{(2)}(T)=((O,E_s);(E_s;O))
$$ 
for any $T\in\dom(\varphi^{(1)})\cap\dom(\varphi^{(2)})$.
\end{thm}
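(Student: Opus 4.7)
I would prove this by constructing the rational maps explicitly in two stages: first apply $(P_1,Q_1)$ to normalize the second slice $B$ to $(E_s,O)$, then apply residual transformations $(P_2,Q_2)$ that preserve $(E_s,O)$ to normalize the first slice to $(O,E_s)$.  The composition $\varphi^{(1)}(T)\define P_2P_1$, $\varphi^{(2)}(T)\define Q_1Q_2$ then satisfies the required identity.

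On the Zariski-open set where the leading $s\times s$ block $B_1$ of $B=(B_1,B_2)$ is invertible, I would set
$$P_1\define B_1^{-1},\qquad Q_1\define \begin{pmatrix} E_s & -B_1^{-1}B_2 \\ O & E_{t-s}\end{pmatrix},$$
so that $P_1BQ_1=(E_s,O)$ while $A$ transforms rationally into $A'\define P_1AQ_1$.  For the second stage, the condition $P_2(E_s,O)Q_2=(E_s,O)$ forces
$$Q_2=\begin{pmatrix} P_2^{-1} & O \\ Q_{2L} & Q_{2R}\end{pmatrix}$$
with $Q_{2R}$ invertible.  Writing $A'=(A'_L,A'_R)$ with $A'_L$ of size $s\times s$ and $A'_R$ of size $s\times (t-s)$, the equation $P_2A'Q_2=(O,E_s)$ decomposes blockwise and can be solved for $P_2$ (which may be taken to be $E_s$), $Q_{2L}$, and $Q_{2R}$ as rational functions of $A'$ under generic invertibility hypotheses on chosen submatrices of $A'_R$.

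At the base point $T_0=((O,E_s);(E_s,O))$ one has $B_1=E_s$ and $B_2=O$, hence $P_1=E_s$ and $Q_1=E_t$, and $A'$ equals $(O,E_s)$, already normalized.  The second-stage formulas can therefore be arranged to return $P_2=E_s$ and $Q_2=E_t$, which yields $\varphi^{(1)}(T_0)=E_s$, $\varphi^{(2)}(T_0)=E_t$, and places $T_0$ inside the domains of both maps.

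The main technical obstacle is the second stage: the column partition of the target $(O,E_s)$ (a zero block of width $t-s$ followed by an identity block of width $s$) does not align with the $s$-and-$(t-s)$ column split of $Q_2$ unless $t=2s$, so for $t\neq 2s$ one must carefully choose which submatrices of $A'_R$ to require invertible so that the explicit rational formulas for $Q_{2L}$ and $Q_{2R}$ are both well-defined at $T_0$ and produce the identity there.  The case $s<t<2s$ is particularly delicate, since the ``first $s$'' and ``last $s$'' rows of $Q_2^{-1}$ overlap, imposing an extra compatibility condition on $A'$ that must be verified on an open neighborhood of $T_0$.
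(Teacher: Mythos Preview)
The paper does not prove this statement: it is quoted from \cite[Theorem~8]{mss} as an earlier result of the same authors and is invoked here without argument.  There is therefore no in-paper proof to compare your attempt against.

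As for the sketch itself, the two-stage strategy and the block form you derive for $Q_2$ are correct, but the proposal is, by your own admission, incomplete at the decisive point.  In particular, your parenthetical remark that $P_2$ ``may be taken to be $E_s$'' is actually false in the range $s<t<2s$: with $P_2=E_s$ the pair of conditions $(E_s,O)Q_2=(E_s,O)$ and $A'Q_2=(O,E_s)$ says that the $2s\times t$ matrices
\[
\begin{pmatrix}(E_s,O)\\ A'\end{pmatrix}
\quad\text{and}\quad
\begin{pmatrix}(E_s,O)\\ (O,E_s)\end{pmatrix}
\]
differ by right multiplication by an element of $\glin(t;\KKK)$, hence have the same column space in $\KKK^{2s}$; but for generic $A'$ (in particular on a Zariski-open neighbourhood of the base point minus a proper closed subset) these two $t$-dimensional subspaces are distinct.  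A nontrivial $P_2$, depending rationally on $T$, is therefore genuinely required, and you have not indicated how to produce it or how to solve for $Q_{2L}$, $Q_{2R}$ once it is chosen.  What you have is a reasonable plan with the hard step---precisely the one you flag as ``particularly delicate''---still open.
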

By considering $\varphi^{(1)}(A_2;A_1)$ and $\varphi^{(2)}(A_2;A_1)$,
where $T=(A_1;A_2)$,
we see the following:
\begin{thm}
\mylabel{thm:rational map}
Let $\KKK$ be an infinite field, and
$s$ and $t$ integers with $0<s<t$.
Then there are rational maps $\varphi^{(1)}$ and $\varphi^{(2)}$ 
from $\KKK^{s\times t\times 2}$ to
$\glin(s;\KKK)$ and $\glin(t;\KKK)$ respectively,
such that
\begin{eqnarray*}
&&((E_s,O);(O,E_s))\in\dom(\varphi^{(1)})\cap\dom(\varphi^{(2)}),\\
&&\varphi^{(1)}((E_s,O);(O,E_s))=E_s,\\
&&\varphi^{(2)}((E_s,O);(O,E_s))=E_t,
\end{eqnarray*}
and
$$
\varphi^{(1)}(T)T\varphi^{(2)}(T)=((E_s,O);(O,E_s))
$$
for any $T\in\dom(\varphi^{(1)})\cap\dom(\varphi^{(2)})$.
\end{thm}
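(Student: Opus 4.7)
The plan is to reduce directly to Theorem \ref{thm:NOLTA 8} by pre-composing with the involution that swaps the two slices. For a tensor $T=(A_1;A_2)\in\KKK^{s\times t\times 2}$, write $T^\sigma\define(A_2;A_1)$. Denote by $\psi^{(1)}$, $\psi^{(2)}$ the rational maps furnished by Theorem \ref{thm:NOLTA 8}, which send a generic $s\times t\times 2$ tensor to matrices that transform it into $((O,E_s);(E_s,O))$. Define
\[
\varphi^{(i)}(T)\define\psi^{(i)}(T^\sigma)\qquad(i=1,2).
\]
Since $T\mapsto T^\sigma$ is an isomorphism of affine spaces, both $\varphi^{(1)}$ and $\varphi^{(2)}$ are rational maps into $\glin(s;\KKK)$ and $\glin(t;\KKK)$ respectively.

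For the base-point conditions, observe that the swap of $T_0\define((E_s,O);(O,E_s))$ is $T_0^\sigma=((O,E_s);(E_s,O))$, which is precisely the point at which $\psi^{(1)}$ and $\psi^{(2)}$ are known to be defined and take the values $E_s$ and $E_t$. Hence $T_0\in\dom(\varphi^{(1)})\cap\dom(\varphi^{(2)})$, with $\varphi^{(1)}(T_0)=E_s$ and $\varphi^{(2)}(T_0)=E_t$.

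For the transformation equation, apply Theorem \ref{thm:NOLTA 8} to $T^\sigma$: for every $T\in\dom(\varphi^{(1)})\cap\dom(\varphi^{(2)})$ one has
\[
\psi^{(1)}(T^\sigma)\,T^\sigma\,\psi^{(2)}(T^\sigma)=((O,E_s);(E_s,O)).
\]
Reading this slice by slice gives $\psi^{(1)}(T^\sigma)A_2\psi^{(2)}(T^\sigma)=(O,E_s)$ and $\psi^{(1)}(T^\sigma)A_1\psi^{(2)}(T^\sigma)=(E_s,O)$, so reassembling the two slices in the original order yields
\[
\varphi^{(1)}(T)\,T\,\varphi^{(2)}(T)=((E_s,O);(O,E_s)),
\]
which is the desired identity. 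There is no substantive obstacle here: the content is entirely contained in Theorem \ref{thm:NOLTA 8}, and the argument is bookkeeping around the slice-swap involution.
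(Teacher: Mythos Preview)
Your proof is correct and is essentially identical to the paper's own argument: the paper derives Theorem~\ref{thm:rational map} from Theorem~\ref{thm:NOLTA 8} precisely by considering $\varphi^{(1)}(A_2;A_1)$ and $\varphi^{(2)}(A_2;A_1)$ for $T=(A_1;A_2)$, i.e., by composing with the slice-swap involution exactly as you do.
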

Note that through this theorem, we see that a generic $s\times t\times 2$
tensor is equivalent to $((E_s,O);(O,E_s))$.
Therefore, this theorem gives another proof of the result
of \cite{Berge and Kiers 1999}.
We also recall the following:
\begin{lemma}[{\cite[Lemma 9]{mss}}]
\label{lem:NOLTA 9}
Let $\KKK$ be an infinite field and
$s$, $t$ and $u$ integers with $0<s<t$.
Then there is a rational map $\varphi_0$ from $\KKK^{s\times t\times u}$
to $\glin(t;\KKK)$ such that
$$
((O,E_s);A_2;\cdots;A_u)\in\dom(\varphi_0)\mbox{ and }
\varphi_0((O,E_s);A_2;\cdots;A_u)=E_t
$$
for any $s\times t$ matrices $A_2$, \ldots, $A_u$
and
$$
(A_1;\cdots;A_u)\varphi_0((A_1;\cdots;A_u))=((O,E_s);\ast;\cdots;\ast)
$$
for any $(A_1;\cdots;A_u)\in\dom(\varphi_0)$.
\end{lemma}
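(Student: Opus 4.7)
The plan is to construct $\varphi_0$ explicitly by solving, rationally in the entries of $A_1$, the matrix equation $A_1 Q = (O,E_s)$; the slices $A_2,\ldots,A_u$ will play no role in the construction. Partition $A_1 = (A_1',A_1'')$ where $A_1'$ consists of the first $t-s$ columns and $A_1''$ of the last $s$ columns of $A_1$. On the Zariski open set $U \subset \KKK^{s\times t\times u}$ where $\det A_1'' \neq 0$, define
$$
\varphi_0(T) \define \begin{pmatrix} E_{t-s} & O \\ -(A_1'')^{-1}A_1' & (A_1'')^{-1} \end{pmatrix}.
$$
Since each entry of this matrix is a polynomial in the entries of $A_1$ divided by $\det A_1''$, this defines a rational map $\varphi_0\colon \KKK^{s\times t\times u} \hbox{$-$\,$-$\,$\to$} \glin(t;\KKK)$.

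Next I would verify the three claimed properties by direct computation. For the special tensor $T = ((O,E_s);A_2;\cdots;A_u)$ (with arbitrary $A_2,\ldots,A_u$), we have $A_1' = O$ and $A_1'' = E_s$, so $\det A_1'' = 1 \neq 0$; hence $T \in \dom(\varphi_0)$ and the formula yields $\varphi_0(T) = E_t$. For the intertwining property, block multiplication gives
$$
A_1\,\varphi_0(T) = \bigl(A_1'\cdot E_{t-s} + A_1''\cdot(-(A_1'')^{-1}A_1'),\ A_1'\cdot O + A_1''\cdot (A_1'')^{-1}\bigr) = (O,E_s)
$$
for every $T \in \dom(\varphi_0)$, so $T\,\varphi_0(T) = ((O,E_s);\ast;\cdots;\ast)$ as required, the $\ast$'s being the matrices $A_k\,\varphi_0(T)$ for $k \geq 2$.

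There is no real obstacle: the construction is forced by the desired conclusion, and the only choice to be made is which $s\times s$ block of $A_1$ to invert. Choosing the last $s$ columns is dictated by the fact that the conclusion specifies the form $(O,E_s)$ (with the identity block on the right); choosing the first $s$ columns would instead produce a map normalizing $A_1$ to $(E_s,O)$. The map depends only on the first slice $A_1$, which is why the hypothesis requires nothing of $A_2,\ldots,A_u$.
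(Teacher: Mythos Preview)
Your construction is correct: the block-triangular matrix you write down is invertible (its determinant is $1/\det A_1''$), the entries are rational in the coordinates of $T$, and the two verifications go through exactly as you say. Note that the paper does not supply its own proof of this lemma---it is quoted verbatim from \cite[Lemma~9]{mss}---so there is nothing in the present paper to compare against; your explicit formula is the natural and presumably intended argument.
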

By considering $\varphi_0(A_u;A_1;\cdots;A_{u-1})$, we see the following:
\begin{lemma}
\mylabel{lem:prep}
Let $\KKK$ be an infinite field and
$s$, $t$ and $u$ integers with $0<s<t$.
Then there is a rational map $\varphi_0$ from $\KKK^{s\times t\times u}$
to $\glin(t;\KKK)$ such that
$$
(A_1;\cdots;A_{u-1};(O,E_s))\in\dom(\varphi_0)
\mbox{ and }
\varphi_0(A_1;\cdots;A_{u-1};(O,E_s))=E_t
$$
for any $s\times t$ matrices $A_1$, $A_2$, \ldots, $A_{u-1}$
and
$$
(A_1;\cdots;A_{u})\varphi_0((A_1;\cdots;A_u))=(\ast;\cdots;\ast;(O,E_s))
$$
for any $(A_1;\cdots;A_u)\in\dom(\varphi_0)$.
\end{lemma}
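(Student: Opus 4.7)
The plan is to reduce Lemma \ref{lem:prep} to Lemma \ref{lem:NOLTA 9} by a cyclic permutation of slices, exactly as the sentence preceding the statement hints. Let $\psi_0\colon \KKK^{s\times t\times u}\hbox{$-$\,$-$\,$\to$}\glin(t;\KKK)$ denote the rational map furnished by Lemma \ref{lem:NOLTA 9}, so that $\psi_0$ is defined at every tensor whose first slice is $(O,E_s)$ (and equals $E_t$ there), and for every $(B_1;\cdots;B_u)\in\dom(\psi_0)$ the first slice of $(B_1;\cdots;B_u)\psi_0(B_1;\cdots;B_u)$ equals $(O,E_s)$.

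I would then set
$$
\varphi_0(A_1;\cdots;A_u)\define \psi_0(A_u;A_1;\cdots;A_{u-1}).
$$
This is again a rational map $\KKK^{s\times t\times u}\hbox{$-$\,$-$\,$\to$}\glin(t;\KKK)$, whose domain is the preimage of $\dom(\psi_0)$ under the cyclic slice shift $(A_1;\cdots;A_u)\mapsto(A_u;A_1;\cdots;A_{u-1})$, which is a linear automorphism of $\KKK^{s\times t\times u}$.

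Next I would verify the two required properties. For the first, if $A_1$, \ldots, $A_{u-1}$ are arbitrary $s\times t$ matrices, then the cyclic shift sends $(A_1;\cdots;A_{u-1};(O,E_s))$ to $((O,E_s);A_1;\cdots;A_{u-1})$, which lies in $\dom(\psi_0)$ and is sent to $E_t$; hence $(A_1;\cdots;A_{u-1};(O,E_s))\in\dom(\varphi_0)$ and $\varphi_0$ takes value $E_t$ there. For the second, given $(A_1;\cdots;A_u)\in\dom(\varphi_0)$, write $Q\define\varphi_0(A_1;\cdots;A_u)=\psi_0(A_u;A_1;\cdots;A_{u-1})$. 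Applying Lemma \ref{lem:NOLTA 9} to the cyclically shifted tensor yields
$$
(A_u;A_1;\cdots;A_{u-1})Q=((O,E_s);\ast;\cdots;\ast),
$$
so in particular $A_uQ=(O,E_s)$, and therefore $(A_1;\cdots;A_u)Q=(\ast;\cdots;\ast;(O,E_s))$, as required.

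No step constitutes a real obstacle; the only thing to keep straight is the bookkeeping between ``first slice'' in Lemma \ref{lem:NOLTA 9} and ``last slice'' in Lemma \ref{lem:prep}, and this is handled by the single cyclic permutation above.
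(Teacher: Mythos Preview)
Your proposal is correct and follows exactly the route indicated in the paper: the paper's entire argument is the sentence ``By considering $\varphi_0(A_u;A_1;\cdots;A_{u-1})$, we see the following,'' and you have simply written out the details of this cyclic-shift reduction to Lemma \ref{lem:NOLTA 9}.
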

Now we state the following result
which is easily proved by
Lemma \ref{lem:prep}, Theorem \ref{thm:rational map} and column operations:
\begin{thm}
\mylabel{thm:rational map gen}
Let $\KKK$ be an infinite field, and
$s$, $t$ and $u$ positive integers with $u\geq 2$ and $(u-1)s<t$.
Set $v=t-(u-1)s$ and
$X=(X_1;\cdots;X_u)$, where
$X_1=(E_s,O_{s\times (t-s)})$,
$X_2=(O_{s\times v}, E_s, O_{s\times (u-2)s})$,
$X_3=(O_{s\times (v+s)}, E_s, O_{s\times (u-3)s})$,
$X_4=(O_{s\times (v+2s)}, E_s, O_{s\times (u-4)s})$,
\ldots,
$X_u=(O_{s\times (t-s)}, E_s)$.
Then there are rational maps $\psi^{(1)}$ and $\psi^{(2)}$ from 
$\KKK^{s\times t\times u}$ to
$\glin(s;\KKK)$ and $\glin(t;\KKK)$ respectively,
such that
$X\in\dom(\psi^{(1)})\cap\dom(\psi^{(2)})$,
$\psi^{(1)}(X)=E_s$,
$\psi^{(2)}(X)=E_t$
and for any $T\in\dom(\psi^{(1)})\cap\dom(\psi^{(2)})$
$$
\psi^{(1)}(T)T\psi^{(2)}(T)=((E_s,O_{s\times v},M);X_2;X_3;\cdots;X_u),
$$
where $M$ is an $s\times (u-2)s$ matrix with
$M^{\leq v}=O$ if $t<us$ or $M=O$ if $t\geq us$.
\end{thm}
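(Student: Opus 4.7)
The plan is to proceed by induction on $u$. The base case $u=2$ is immediate from Theorem~\ref{thm:rational map}: here $v=t-s$, the block $M$ is empty, and $X$ coincides exactly with the canonical target $((E_s,O_{s\times(t-s)});(O_{s\times(t-s)},E_s))$ delivered by that theorem.

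For the inductive step with $u\geq 3$, I proceed in three stages. \emph{Stage 1:} apply Lemma~\ref{lem:prep} to produce a rational map $\varphi_0$ with $T\varphi_0(T)=(A_1';\ldots;A_{u-1}';(O_{s\times(t-s)},E_s))$; since $X_u=(O_{s\times(t-s)},E_s)$, we have $X\in\dom(\varphi_0)$ and $\varphi_0(X)=E_t$. \emph{Stage 2:} consider the $s\times(t-s)\times(u-1)$ sub-tensor $T^{\flat}$ consisting of the first $t-s$ columns of the first $u-1$ slices of $T\varphi_0(T)$. The parameters $(s,t-s,u-1)$ satisfy the hypotheses of the theorem (in particular $(u-2)s<t-s$) with the same value $v=(t-s)-(u-2)s$, and a direct check shows that the first $t-s$ columns of $X_1,\ldots,X_{u-1}$ form precisely the canonical target at this level. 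By the induction hypothesis there exist $P\in\GL(s;\KKK)$ and $Q'\in\GL(t-s;\KKK)$, depending rationally on $T$, with $PT^{\flat}Q'$ in canonical form; both reduce to identity matrices at $T=X$.

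\emph{Stage 3:} multiply $T\varphi_0(T)$ on the left by $P$ and on the right by $\diag(Q',P^{-1})$. A direct computation shows the last slice $(O,E_s)$ is preserved (the two copies of $P$ cancel in the bottom-right block), the first $t-s$ columns of slices $1,\ldots,u-1$ become the canonical form from Stage~2, and the last $s$ columns of slice $i$ become $N_i:=P\cdot{}_{t-s<}A_i'\cdot P^{-1}$. I then apply the column operation $R=\left(\begin{smallmatrix} E_{t-s}&Q_{12}\\ 0&E_s\end{smallmatrix}\right)$, which preserves the last slice and adds $X_i^{(u-1)}Q_{12}$ to the last $s$ columns of slice~$i$, where $X_i^{(u-1)}$ denotes the $i$-th canonical slice at level $u-1$. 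Setting the rows of $Q_{12}$ indexed by the $E_s$-positions of $X_i^{(u-1)}$ (namely rows $v+(i-2)s+1,\ldots,v+(i-1)s$) equal to $-N_i$ for $i=2,\ldots,u-1$ zeros out the tails of slices~$2,\ldots,u-1$, leaving rows $1,\ldots,v$ of $Q_{12}$ still free. These free rows are then used to meet the condition on $M$: when $t\geq us$ we have $v\geq s$, so all of rows $1,\ldots,s$ of $Q_{12}$ lie in the free region, induction gives $M'=O$, and setting rows $1,\ldots,s$ equal to $-N_1$ forces $M=O$; when $t<us$ we have $v<s$, only rows $1,\ldots,v$ are free, induction gives $(M')^{\leq v}=O$, and setting these rows equal to $-N_1^{\leq v}$ forces $M^{\leq v}=O$.

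Taking $\psi^{(1)}(T):=P$ and $\psi^{(2)}(T):=\varphi_0(T)\cdot\diag(Q',P^{-1})\cdot R$ yields the required rational maps. At $T=X$ each intermediate map evaluates to an identity matrix (and all $N_i$ vanish, since the last $s$ columns of each $X_i$ with $i<u$ are zero), so $\psi^{(1)}(X)=E_s$, $\psi^{(2)}(X)=E_t$, and $X$ lies in both domains with the claimed values. The main difficulty I anticipate is purely notational, namely keeping the block partitions of $Q_{12}$, $M'$, and the $N_i$ consistently aligned as the induction unrolls, particularly in the case $v<s$ where the row-block of $Q_{12}$ used by slice~1 overlaps with that used by slice~2.
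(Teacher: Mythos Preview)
Your proposal is correct and follows essentially the same route the paper indicates: the paper's proof is the single sentence ``easily proved by Lemma~\ref{lem:prep}, Theorem~\ref{thm:rational map} and column operations,'' and your induction on $u$ is precisely an organized unrolling of that hint, using Theorem~\ref{thm:rational map} as the base case, Lemma~\ref{lem:prep} to peel off the last slice, and the column operation $R$ to clean up the tails. Your careful tracking of which rows of $Q_{12}$ remain free, together with the inductive condition $(M')^{\leq v}=O$ to kill the cross-term $M'Q_{12}^{(3)}$ in the top $v$ rows, is exactly the bookkeeping the paper suppresses.
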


Now we state the proof of Theorem \ref{thm:main m}:

\begin{proofof}{Theorem \ref{thm:main m}}
We may assume that $A_m=E_n$.
Set 
$Y_1=(E_n,O_{n\times (p-n)})$,
$Y_2=(O_{n\times (n-l)},E_n,O_{n\times (m-3)n})$,
$Y_3=(O_{n\times (2n-l)},E_n,O_{n\times (m-4)n})$,
$Y_4=(O_{n\times (3n-l)},E_n,O_{n\times (m-5)n})$,
\ldots,
$Y_{m-1}=(O_{n\times (p-n)},E_n)$,
$Y_m=(A,A_3,A_4,\ldots,A_{m-1})$
and
$Y=(Y_1;Y_2;\cdots;Y_m)$.
Let $\psi^{(1)}$ and $\psi^{(2)}$ be rational maps from
$\RRR^{n\times p\times (m-1)}$ to $\glin(n,\RRR)$ and $\glin(p,\RRR)$
respectively of Theorem \ref{thm:rational map gen}.

Consider the set $U$ of $n\times p\times m$ tensors
$(X_1;X_2;\cdots;X_m)$ over $\RRR$ such that 
$X=(X_1;\cdots;X_{m-1})\in\dom(\psi^{(1)})\cap\dom(\psi^{(2)})$ and if we set
$V=\begin{pmatrix}\psi^{(1)}(X)X_1\psi^{(2)}(X)\\ O\ \ E_{p-n}\end{pmatrix}^{-1}$,
then
$$
\left(
\begin{pmatrix}B_1&O\\C_1&C_0\end{pmatrix};
\begin{pmatrix}B_0&B_2\\O&C_2\end{pmatrix};
\begin{pmatrix}B_3\\C_3\end{pmatrix};
\cdots;
\begin{pmatrix}B_{m-1}\\C_{m-1}\end{pmatrix};
\begin{pmatrix}E_n\\E_n\end{pmatrix}
\right)
$$
is \afr,
where 
$\psi^{(1)}(X)X_m\psi^{(2)}(X)=(B_1,B_0,B_2,B_3,\ldots,B_{m-1})$,
$\psi^{(1)}(X)X_m\psi^{(2)}(X)V=(C_1,C_0,C_2,C_3,\ldots,C_{m-1})$,
$B_0$ and $C_0$ are $n\times l$ matrices,
$B_1$, $B_2$, $C_1$ and $C_2$ are $n\times (n-l)$ matrices
and 
$B_3$, \ldots, $B_{m-1}$ and $C_3$, \ldots, $C_{m-1}$ are
$n\times n$ matrices.
Then we see that $U$ is  a Euclidean open set containing $Y$ by 
Theorem \ref{thm:afr open}
since rational maps are continuous.

Now we
\begin{claim}
If $T\in U$, then $\rank_\RRR T>p$.
\end{claim}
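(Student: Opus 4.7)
My plan is to argue by contradiction: suppose $\rank_\RRR T\le p$ for some $T\in U$. Since $\psi^{(1)}(X)$, $\psi^{(2)}(X)$, and $V$ are invertible, the tensor $T'' := \psi^{(1)}(X)\,T\,\psi^{(2)}(X)\,V$ has the same rank as $T$. Its slices are explicit: $X_1''=(E_n,O_{n\times(p-n)})$ by the very definition of $V$; $X_s''=Y_s$ for $3\le s\le m-1$ (because $Y_s V = Y_s$ when $s\ge 3$); $X_2''=Y_2V$, which differs from $Y_2$ only in its last $(m-3)n$ columns via an explicit $M$-correction; and $X_m''=(C_1,C_0,C_2,\ldots,C_{m-1})$. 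A direct column-by-column inspection of the stacked $nm\times p$ flattening of $T''$ isolates $p$ linearly independent columns from the disjoint identity blocks of $Y_2,Y_3,\ldots,Y_{m-1}$ together with the $E_n$ in slice~$1$, so $\crank T''=p$, and therefore $\rank T''=p$.

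Fix a rank-$p$ decomposition $T''=\sum_{k=1}^{p} u_k\otimes v_k\otimes w_k$ with all factors nonzero. The equality $\rank T''=\crank T''=p$ forces $V_\mathrm{mat}:=[v_1\mid\cdots\mid v_p]$ to be invertible; let $\xi_k$ be the $k$-th column of $V_\mathrm{mat}^{-T}$, so that $X_i''\xi_k=w_{k,i}u_k$ for all $i,k$. Reading these slice by slice expresses $\xi_k$ componentwise in terms of $u_k,w_k$ and an auxiliary vector $\mu_k:=\sum_{i=3}^{m-1}w_{k,i}M_iu_k$ (which enters the slice-$2$ block precisely because $X_2''\neq Y_2$). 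Substituting these formulas into $X_m''\xi_k=w_{k,m}u_k$ and using the column-block structure of $X_m''$ produces the \emph{bottom} equation
\[
\sum_{i=1}^{m}x_i^{(k)}\,D_i^{\mathrm{bot}}\,u_k=0,
\qquad x^{(k)}:=(w_{k,1},\ldots,w_{k,m-1},-w_{k,m})\neq 0.
\]
Performing the parallel computation on $T'=T''V^{-1}$, whose slice $m$ is $(B_1,B_0,B_2,\ldots,B_{m-1})$, with $\eta_k:=V\xi_k$ delivers the \emph{top} equation
\[
\sum_{i=1}^{m}x_i^{(k)}\,D_i^{\mathrm{top}}\,u_k=D_1^{\mathrm{top}}\mu_k.
\]

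The main obstacle is the correction term $D_1^{\mathrm{top}}\mu_k$ on the right-hand side: if it could be shown to vanish for each $k$, the pair $(x^{(k)},u_k)$ would satisfy $\sum_i x_i^{(k)}D_i u_k=0$ with $x^{(k)}\neq 0$, and the \afcr\ property of the $2n\times n\times m$ tensor would then force $u_k=0$, contradicting $u_k\neq 0$. To eliminate this correction I would exploit the invertibility of $V_\mathrm{mat}^{-T}$ (equivalently, the linear independence of the $\xi_k$'s): using the identity $C_i=B_i-D_1^{\mathrm{bot}}M_i$ for $i\ge 3$ (which links the top and bottom afr-slices via $M$) together with the consistency $\mu_k^{(n-l+1..n)}=w_{k,1}u_k^{(n-l+1..n)}-w_{k,2}u_k^{(1..l)}$ coming from the slice-$1$/slice-$2$ overlap, a nonzero $D_1^{\mathrm{top}}\mu_k$ for some $k$ should force a nontrivial linear dependence among the $\xi_k$'s, contradicting the invertibility of $V_\mathrm{mat}^{-T}$. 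Once $D_1^{\mathrm{top}}\mu_k=0$ is verified, a single application of the \afcr\ hypothesis closes the argument.
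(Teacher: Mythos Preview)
Your overall strategy coincides with the paper's: from a rank-$p$ decomposition one extracts two $n$-dimensional equations (one built from the $B$-blocks, one from the $C$-blocks) which, stacked, contradict the \afcr\ hypothesis applied to the pair $(x^{(k)},u_k)$. Your bottom equation from $T''$ is correct and clean, exactly as in the paper.

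The gap is in your treatment of the top equation. You write it with a correction term $D_1^{\mathrm{top}}\mu_k$ and then launch into a heuristic (``should force a nontrivial linear dependence among the $\xi_k$'') that is neither precisely stated nor proved. In fact no such argument is needed: the correction term is identically zero. The key structural input you are not using is the row-vanishing in Theorem~\ref{thm:rational map gen}: since here $t=p<(m-1)n=us$, that theorem gives $M^{\le v}=O$ with $v=n-l$, i.e.\ the first $n-l$ rows of $M=(M_3,\ldots,M_{m-1})$ vanish. Hence
\[
\mu_k=\sum_{i=3}^{m-1}w_{k,i}M_iu_k\quad\text{has}\quad (\mu_k)^{\le n-l}=0,
\]
and since $D_1^{\mathrm{top}}=(B_1,\,O_{n\times l})$ only sees the first $n-l$ entries of its argument, $D_1^{\mathrm{top}}\mu_k=B_1\cdot(\mu_k)^{\le n-l}=0$. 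With this observation your top equation is clean and the \afcr\ contradiction follows immediately, just as in the paper.

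The paper encodes the same point in matrix form: it works directly with $Z=\psi^{(1)}(X)T\psi^{(2)}(X)$ and uses $Z_1^{\le n-l}$ (which is free of $M$ precisely because $M^{\le n-l}=O$) to build $Q^{-1}$, and uses the full $Z_1$ together with ${}^{l<}Z_2$ to build $V^{-1}Q^{-1}$; computing $PD_m$ both as $Z_mQ^{-1}$ and as $(Z_mV)(V^{-1}Q^{-1})$ then yields the two clean equations without any residual term. Once you insert the observation $M^{\le n-l}=O$, your proof and the paper's are the same argument.
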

Assume the contrary and take $T\in U$ with $\rank T\leq p$.
Set
$T=(X_1;X_2;\cdots;X_m)$,
$X=(X_1;X_2;\cdots;X_{m-1})$,
$V$, $B_0$, $B_1$, \ldots, $B_{m-1}$, $C_0$, $C_1$, \ldots, $C_{m-1}$
as above and
$\psi^{(1)}(X)T\psi^{(2)}(X)=Z=(Z_1;Z_2;\cdots;Z_m)$.
Then by the definition of $U$,
$$
\left(
\begin{pmatrix}B_1&O\\C_1&C_0\end{pmatrix};
\begin{pmatrix}B_0&B_2\\O&C_2\end{pmatrix};
\begin{pmatrix}B_3\\C_3\end{pmatrix};
\cdots;
\begin{pmatrix}B_{m-1}\\C_{m-1}\end{pmatrix};
\begin{pmatrix}E_n\\E_n\end{pmatrix}
\right)
$$
is \afr.

Since $Z$ and $T$ are equivalent, 
$\rank Z=\rank T\leq p$.
On the other hand,
since $\crank Z=p$, we see that $\rank Z\geq p$.
So there are an $n\times p$ matrix $P$, a $p\times p$ matrix $Q$ and
$p\times p$ diagonal matrices $D_k$ with
$Z_k=PD_kQ$ for $k=1,2,\ldots, m$.
Since
$$E_p=\begin{pmatrix}Z_1^{\leq n-l}\\Z_2\\\vdots\\Z_{m-1}\end{pmatrix}
=\begin{pmatrix}P^{\leq n-l}D_1\\PD_2\\\vdots\\PD_{m-1}\end{pmatrix}Q,
$$
we see that $Q$ is nonsingular and
$$Q^{-1}=
\begin{pmatrix}P^{\leq n-l}D_1\\PD_2\\\vdots\\PD_{m-1}\end{pmatrix}.
$$
Moreover, since
$$V^{-1}=
\begin{pmatrix}Z_1\\{}^{l<}Z_2\\Z_3\\\vdots\\Z_{m-1}\end{pmatrix}=
\begin{pmatrix}PD_1\\{}^{l<}PD_2\\PD_3\\\vdots\\PD_{m-1}\end{pmatrix}Q,
$$
we see that
$$V^{-1}Q^{-1}=
\begin{pmatrix}PD_1\\{}^{l<}PD_2\\PD_3\\\vdots\\PD_{m-1}\end{pmatrix}.
$$

Now set
$P=(\uuu_1,\uuu_2,\ldots,\uuu_p)$ and
$D_k=\diag(d_{k1},d_{k2},\ldots,d_{kp})$ for $k=1,2,\ldots, m$.
Then
\begin{eqnarray*}
PD_m&=&Z_mQ^{-1}\\
&=&B_1P^{\leq n-l}D_1+(B_0,B_2)PD_2+B_3PD_3+\cdots+B_{m-1}PD_{m-1}\\
&=&(B_1,O)PD_1+(B_0,B_2)PD_2+B_3PD_3+\cdots+B_{m-1}PD_{m-1}.
\end{eqnarray*}
On the other hand, we see
\begin{eqnarray*}
PD_m&=&(Z_mV)(V^{-1}Q^{-1})\\
&=&(C_1,C_0)PD_1+C_2{}^{l<}PD_2+C_3PD_3+\cdots+C_{m-1}PD_{m-1}\\
&=&(C_1,C_0)PD_1+(O,C_2)PD_2+C_3PD_3+\cdots+C_{m-1}PD_{m-1}.
\end{eqnarray*}
In particular,
\begin{equation}
\mylabel{eqn:pdm}
\begin{array}{rcl}
\displaystyle
\begin{pmatrix}B_1&O\\ C_1&C_0\end{pmatrix}PD_1&+&
\displaystyle
\begin{pmatrix}B_0&B_2\\ O&C_2\end{pmatrix}PD_2+
\begin{pmatrix}B_3\\ C_3\end{pmatrix}PD_3+
\cdots\\
\cdots&+&
\displaystyle
\begin{pmatrix}B_{m-1}\\ C_{m-1}\end{pmatrix}PD_{m-1}-
\begin{pmatrix}E_n\\ E_n \end{pmatrix}PD_m=O.
\end{array}
\end{equation}

Since $\rank Z=p$,
we see that $\uuu_1\neq\zerovec$ and $(d_{11},d_{21},\ldots, d_{m1})\neq(0,0,\ldots, 0)$.
Therefore 
\begin{eqnarray*}
\rank\bigg(
d_{11}\begin{pmatrix}B_1&O\\ C_1&C_0\end{pmatrix}+
d_{21}\begin{pmatrix}B_0&B_2\\ O&C_2\end{pmatrix}+
d_{31}\begin{pmatrix}B_3\\ C_3\end{pmatrix}+\cdots&\\
+
d_{m-1,1}\begin{pmatrix}B_{m-1}\\ C_{m-1}\end{pmatrix}-
d_{m1}\begin{pmatrix}E_n\\ E_n \end{pmatrix}
\bigg)&=&n,
\end{eqnarray*}
since 
$\left(
\begin{pmatrix}B_1&O\\ C_1&C_0\end{pmatrix};
\begin{pmatrix}B_0&B_2\\ O&C_2\end{pmatrix};
\begin{pmatrix}B_3\\ C_3\end{pmatrix};
\cdots;
\begin{pmatrix}B_{m-1}\\ C_{m-1}\end{pmatrix};
\begin{pmatrix}E_n\\ E_n \end{pmatrix}
\right)$
is \afr.
However, by observing the first column of equation (\ref{eqn:pdm}),
we see that
\begin{eqnarray*}
\bigg(
d_{11}\begin{pmatrix}B_1&O\\ C_1&C_0\end{pmatrix}+
d_{21}\begin{pmatrix}B_0&B_2\\ O&C_2\end{pmatrix}+
d_{31}\begin{pmatrix}B_3\\ C_3\end{pmatrix}+
\cdots&\\
+
d_{m-1,1}\begin{pmatrix}B_{m-1}\\ C_{m-1}\end{pmatrix}-
d_{m1}\begin{pmatrix}E_n\\ E_n \end{pmatrix}
\bigg)
\uuu_1&=&\zerovec.
\end{eqnarray*}
This is a contradiction.
\end{proofof}

%
%

\section{Existence of sequences of matrices with Condition \ref{cond:afr parts m}
and plural typical ranks}

In this section, we argue for the existence of a sequence of matrices
with Condition \ref{cond:afr parts m}
and apply the result to show the existence of plural typical ranks in
some sizes of 3-tensors.

First we recall the condition of the sizes of which an
\ans\ tensor exists.

\begin{definition}\rm
Let $n$ be a positive integer.
Set $n=(2a+1)2^{b+4c}$, where $a$, $b$ and $c$ are integers with
$0\leq b<4$.
Then we define
$\rho(n)\define 8c+2^b$.
\end{definition}
$\rho(n)$ is called the \hr\ function.
Now we recall the following:

\begin{thm}[{\cite[Theorem 2.2]{ssm}}]
\mylabel{thm:ssm22}
There exists an $n\times n\times p$ absolutely nonsingular tensor if and only if $p\leq\rho(n)$.
\end{thm}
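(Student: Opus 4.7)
The plan is to establish both directions of the equivalence using classical Hurwitz--Radon theory, which is precisely the theory that governs when a linear pencil of $n\times n$ real matrices consists entirely of invertible matrices off zero.

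For the sufficiency direction ($p\le\rho(n)$ implies existence), I would invoke the explicit construction of a \emph{Hurwitz--Radon system}: for $p=\rho(n)$ there exist real orthogonal $n\times n$ matrices $A_1,\ldots,A_p$ satisfying $A_i\transpose A_j+A_j\transpose A_i=2\delta_{ij}E_n$. Such systems are assembled inductively from $2\times 2$ rotations, the Pauli-type $4\times 4$ anticommuting family, and Kronecker products, and I would simply recall this standard construction and cite the classical references. The key computation is then
\[
\Bigl(\sum_{k=1}^p x_k A_k\Bigr)\transpose \Bigl(\sum_{k=1}^p x_k A_k\Bigr)
= \sum_{i,j} x_i x_j A_i\transpose A_j
= \Bigl(\sum_{k=1}^p x_k^2\Bigr) E_n,
\]
which is nonsingular whenever $(x_1,\ldots,x_p)\ne(0,\ldots,0)$. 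Hence $(A_1;\cdots;A_p)$ is absolutely nonsingular, and truncating to any subset of size at most $\rho(n)$ preserves the property.

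For the necessity direction, I would argue by contradiction: suppose $T=(A_1;\cdots;A_p)$ is an $n\times n\times p$ absolutely nonsingular tensor with $p>\rho(n)$. Since $A_1$ must itself be nonsingular, I can replace $T$ by the equivalent tensor $A_1^{-1}T$ and assume $A_1=E_n$. For each unit vector $v\in S^{n-1}$ I would consider $A_2v,\ldots,A_pv\in\RRR^n$ and their orthogonal projections $w_k(v)$ onto the tangent space $v^\perp$. The linear independence of $w_2(v),\ldots,w_p(v)$ follows from absolute nonsingularity: any nontrivial relation $\sum_{k\ge 2} x_k w_k(v)=0$ would give $\sum_{k\ge 2} x_k A_k v=\lambda v$ for some scalar $\lambda$, so the nonzero tuple $(-\lambda,x_2,\ldots,x_p)$ yields a combination $-\lambda A_1+\sum_{k\ge 2} x_k A_k$ that kills $v$, which is impossible. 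This exhibits $p-1$ everywhere linearly independent tangent vector fields on $S^{n-1}$, and Adams's theorem on vector fields on spheres then forces $p-1\le\rho(n)-1$.

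The main obstacle is the necessity direction: it rests on Adams's deep topological result that $S^{n-1}$ admits at most $\rho(n)-1$ linearly independent continuous tangent vector fields, whose proof uses $K$-theory and the $e$-invariant. I would not reproduce that argument but only cite it. The sufficiency direction is entirely elementary once a Hurwitz--Radon system is on hand, and the reduction in the necessity direction is a short linear-algebra calculation that converts the pencil hypothesis into a vector-field problem on the sphere.
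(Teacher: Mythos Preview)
The paper does not give its own proof of this theorem: it is quoted verbatim from \cite[Theorem~2.2]{ssm} and only \emph{recalled} here, so there is no in-paper argument to compare against directly. That said, your proposal is correct, and it matches the standard approach (which is essentially the one in \cite{ssm}).

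For the sufficiency direction, your Hurwitz--Radon construction is exactly what the paper itself recalls in Lemma~\ref{lem:hr to ans}, Proposition~\ref{prop:gs15} and Theorem~\ref{thm:gs16} (restricted there to powers of $2$, since that is all the paper needs later). Your key identity is the same as Lemma~\ref{lem:hr to ans}. For the necessity direction, your reduction to Adams's theorem is the right move and the details are sound: normalising $A_1=E_n$, projecting $A_kv$ onto $v^\perp$ gives continuous tangent fields, and a dependence relation among the projections yields a nonzero tuple $(-\lambda,x_2,\ldots,x_p)$ with $\bigl(-\lambda E_n+\sum_{k\ge 2}x_kA_k\bigr)v=0$, contradicting absolute nonsingularity. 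The only thing to be aware of is that you are importing a deep $K$-theoretic result as a black box, which is appropriate here since the paper (and \cite{ssm}) do the same.
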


For later use we recall a method which can construct an $n\times n\times \rho(n)$
\ans\ tensor explicitly for the case where $n=2^d$ for some positive integer $d$.
First we state the following:

\begin{definition}\rm
Let $\{A_1, \ldots, A_s\}$ be a family of $n\times n$ matrices with entries in $\RRR$.
If
\begin{enumerate}
\item
$A_iA_i\transpose=E_n$ for $1\leq i\leq s$,
\item
$A_i=-A_i\transpose$ for $1\leq i\leq s$ and
\item
$A_iA_j=-A_jA_i$ for $i\neq j$,
\end{enumerate}
then we say that $\{A_1, \ldots, A_s\}$ is a \hr\ family of order $n$.
\end{definition}
The following result immediately follows from the definition.

\begin{lemma}
\mylabel{lem:subfam}
A subfamily of a \hr\ family is a \hr\ family.
\end{lemma}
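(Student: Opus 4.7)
The claim is essentially tautological: each of the three defining conditions of a Hurwitz-Radon family is a property of either a single matrix or an unordered pair of distinct matrices in the family, and such properties are automatically inherited by subfamilies. So my plan is to simply verify each of the three defining conditions in Definition (preceding Lemma \ref{lem:subfam}) on an arbitrary subfamily.

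Concretely, let $\{A_1,\ldots,A_s\}$ be a \hr\ family of order $n$, and let $\{A_{i_1},\ldots,A_{i_t}\}$ be an arbitrary subfamily, indexed by some $\{i_1,\ldots,i_t\}\subseteq\{1,\ldots,s\}$. I would check: (i) for each $k$ with $1\leq k\leq t$, the relation $A_{i_k}A_{i_k}\transpose=E_n$ holds because it holds in the original family; (ii) likewise $A_{i_k}=-A_{i_k}\transpose$; (iii) for $k\neq k'$, we have $i_k\neq i_{k'}$, so the anticommutation relation $A_{i_k}A_{i_{k'}}=-A_{i_{k'}}A_{i_k}$ is inherited from the original family. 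Therefore $\{A_{i_1},\ldots,A_{i_t}\}$ satisfies conditions (1), (2) and (3) of the definition, hence is a \hr\ family of order $n$.

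There is no obstacle of any kind here; the statement is a direct restriction of a universally quantified set of conditions to a subset. The only writing decision is whether to spell out the three lines explicitly or simply remark that the conditions are preserved under taking subsets, as the author in fact does by saying the result follows immediately from the definition.
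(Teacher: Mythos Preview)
Your proposal is correct and matches the paper's approach exactly: the paper simply states that the result ``immediately follows from the definition'' without writing out any details, and your explicit verification of conditions (1)--(3) is precisely the tautological check that justifies this.
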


Next we note the following lemma which is easily verified.

\begin{lemma}
\label{lem:hr to ans}
Let $\{A_1, \ldots, A_s\}$ be a \hr\ family of order $n$.
Set $A_{s+1}=E_n$.
Then for any $x_1$, \ldots, $x_{s+1}\in \RRR$,
$$
\left(\sum_{k=1}^{s+1}x_kA_k\right)
\left(\sum_{k=1}^{s+1}x_kA_k\right)\transpose
=(x_1^2+\cdots+x_s^2+x_{s+1}^2)E_n.
$$
In particular, $(A_1;\cdots;A_s;E_n)$ is an $n\times n\times (s+1)$ \ans\ tensor.
\end{lemma}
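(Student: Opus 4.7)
The plan is to expand the product $\left(\sum_{k=1}^{s+1}x_kA_k\right)\left(\sum_{k=1}^{s+1}x_kA_k\right)^\top$ into the double sum $\sum_{i,j=1}^{s+1} x_i x_j A_i A_j^\top$ and then use the three defining properties of a Hurwitz--Radon family (together with $A_{s+1}=E_n=A_{s+1}^\top$) to collapse it to the claimed scalar multiple of $E_n$. I would split the double sum into diagonal terms ($i=j$) and off-diagonal pairs ($\{i,j\}$ with $i\ne j$), and treat the index $s+1$ separately, since $E_n$ is symmetric while the $A_i$ with $i\le s$ are skew-symmetric.

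For the diagonal terms, property (1) gives $A_i A_i^\top = E_n$ for $1\le i\le s$, and trivially $A_{s+1}A_{s+1}^\top = E_n$, so the diagonal contribution is $(x_1^2+\cdots+x_{s+1}^2)E_n$. For the off-diagonal terms I would group each unordered pair $\{i,j\}$ with $i\ne j$. When $1\le i<j\le s$, property (2) rewrites $A_i A_j^\top + A_j A_i^\top = -(A_i A_j + A_j A_i)$, which vanishes by property (3); so these contribute $0$. When $j=s+1$ and $1\le i\le s$, the pair contributes $x_ix_{s+1}(A_i A_{s+1}^\top + A_{s+1} A_i^\top) = x_i x_{s+1}(A_i - A_i)=0$ by property (2). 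This exhausts all off-diagonal pairs and proves the displayed identity.

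For the ``in particular'' statement, observe that the identity gives
\begin{equation*}
\det\!\left(\sum_{k=1}^{s+1}x_k A_k\right)^{2}
=\det\!\left(\left(\sum_{k=1}^{s+1}x_k A_k\right)\left(\sum_{k=1}^{s+1}x_k A_k\right)^{\top}\right)
=(x_1^2+\cdots+x_{s+1}^2)^{n}.
\end{equation*}
Hence $\det(\sum_k x_k A_k)=0$ forces $x_1^2+\cdots+x_{s+1}^2=0$, and since the $x_k$ are real, $x_1=\cdots=x_{s+1}=0$. By the definition of absolute nonsingularity this shows that $(A_1;\cdots;A_s;E_n)$ is an $n\times n\times(s+1)$ absolutely nonsingular tensor.

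There is no real obstacle: the proof is a bookkeeping exercise in the three Hurwitz--Radon axioms, and the only point requiring minor care is keeping track of signs when transposing skew-symmetric $A_i$ versus the symmetric $A_{s+1}=E_n$, which is handled cleanly by the pair-wise grouping above.
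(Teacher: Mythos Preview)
Your argument is correct and is exactly the direct verification the paper has in mind: the paper does not write out a proof at all, stating only that the lemma ``is easily verified,'' and your expansion of $\sum_{i,j}x_ix_jA_iA_j^\top$ into diagonal and paired off-diagonal terms using the three \hr\ axioms is the natural way to carry this out. The deduction of absolute nonsingularity via the determinant is likewise the intended conclusion.
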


Set
$$
A=\begin{pmatrix}0&1\\-1&0\end{pmatrix},\quad
P=\begin{pmatrix}0&1\\1&0\end{pmatrix},\quad
Q=\begin{pmatrix}1&0\\0&-1\end{pmatrix}.
$$
Then the the following results hold.

\begin{prop}[{\cite[Proposition 1.5]{gs}}]
\label{prop:gs15}
\begin{enumerate}
\item
\mylabel{item:2}
$\{A\}$ is a \hr\ family of order $2$.
\item
\mylabel{item:4}
$\{A\otimes E_2, P\otimes A, Q\otimes A\}$ is a \hr\ family of order $4$.
\item
\mylabel{item:8}
$\{E_2\otimes A\otimes E_2, E_2\otimes P\otimes A, Q\otimes Q\otimes A,
P\otimes Q\otimes A, A\otimes P\otimes Q, A\otimes P\otimes P, A\otimes Q\otimes E_2\}$
is a \hr\ family of order $8$.
\end{enumerate}
\end{prop}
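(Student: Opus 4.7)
The plan is to reduce everything to a short list of identities among the $2\times 2$ matrices $A$, $P$, $Q$ and then read off the three conditions via the Kronecker product rules. First I would record the base relations, all verifiable by direct $2\times 2$ multiplication (or by noting $AP=Q$, $PQ=-A$, $QA=P$): $A\transpose=-A$ and $A^2=-E_2$; $P\transpose=P$, $Q\transpose=Q$, $P^2=Q^2=E_2$; any two distinct elements of $\{A,P,Q\}$ anticommute; and $E_2$ commutes with everything. Part \ref{item:2} is then immediate, since $AA\transpose=-A^2=E_2$ and $A=-A\transpose$, with no commutation relations to check.

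For parts \ref{item:4} and \ref{item:8} I would invoke the standard Kronecker identities $(X_1\otimes\cdots\otimes X_k)(Y_1\otimes\cdots\otimes Y_k)=X_1Y_1\otimes\cdots\otimes X_kY_k$ and $(X_1\otimes\cdots\otimes X_k)\transpose=X_1\transpose\otimes\cdots\otimes X_k\transpose$. Each matrix in the lists of \ref{item:4} and \ref{item:8} is a Kronecker product of factors drawn from $\{E_2,A,P,Q\}$ in which exactly one factor equals $A$. Transposition therefore flips the sign in the single $A$-slot and leaves the other slots (which are symmetric) unchanged, giving skew-symmetry. Orthogonality follows from $A^2=-E_2$ in the $A$-slot combined with $E_2^2=P^2=Q^2=E_2$ in the remaining slots.

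For the pairwise anticommutation condition I would use a parity argument. Define $\epsilon(X,Y)\in\{0,1\}$ by $XY=(-1)^{\epsilon(X,Y)}YX$ for $X,Y\in\{E_2,A,P,Q\}$; by the base relations $\epsilon$ equals $1$ precisely on the three unordered pairs $\{A,P\}$, $\{A,Q\}$, $\{P,Q\}$ and vanishes otherwise (in particular whenever an $E_2$ or a repeated letter appears). By the product Kronecker identity, two matrices with slot sequences $(X_1,\ldots,X_k)$ and $(Y_1,\ldots,Y_k)$ satisfy $MN=(-1)^{\sum_i\epsilon(X_i,Y_i)}NM$, so the task reduces to verifying that $\sum_i\epsilon(X_i,Y_i)$ is odd for every ordered pair of distinct entries in each list.

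The remaining work is pure bookkeeping. In \ref{item:4} there are only three pairs; each contributes exactly one crossing of type $\{A,P\}$, $\{A,Q\}$, or $\{P,Q\}$ in some slot and commutes in the others, so each sum equals $1$. In \ref{item:8} there are $\binom{7}{2}=21$ pairs, each a sum of three bits read off directly from the slot patterns $(E_2,A,E_2)$, $(E_2,P,A)$, $(Q,Q,A)$, $(P,Q,A)$, $(A,P,Q)$, $(A,P,P)$, $(A,Q,E_2)$; a quick table shows every such sum is $1$ or $3$, hence odd. The only obstacle is this enumeration, and no conceptual difficulty arises.
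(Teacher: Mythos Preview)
Your verification is correct. Note, however, that the paper does not supply its own proof of this proposition: it is quoted verbatim from \cite[Proposition~1.5]{gs} and simply invoked as a known fact. So there is no in-paper argument to compare against; your direct computation via the base relations among $A,P,Q,E_2$ together with the Kronecker identities $(X\otimes Y)(X'\otimes Y')=XX'\otimes YY'$ and $(X\otimes Y)\transpose=X\transpose\otimes Y\transpose$ is an entirely self-contained replacement for the citation. The parity bookkeeping you outline for the $21$ pairs in part \ref{item:8} does go through (each sum is indeed $1$ or $3$), and the skew-symmetry and orthogonality checks are immediate from the observation that every listed tensor product contains exactly one $A$-factor.
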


\begin{thm}[{\cite[Theorem 1.6]{gs}}]
\label{thm:gs16}
Let $\{M_1, \ldots, M_s\}$ be a \hr\ family of order $n$.
Then
\begin{enumerate}
\item
$\{A\otimes E_n, Q\otimes M_1, \ldots, Q\otimes M_s\}$ is a \hr\ family of order $2n$.
\item
If moreover, $\{L_1, \ldots, L_t\}$ is a \hr\ family of order $m$, then
$\{P\otimes M_1\otimes E_m, \ldots, P\otimes M_s\otimes E_m, 
Q\otimes E_n\otimes L_1, \ldots, Q\otimes E_n \otimes L_t, A\otimes E_{mn}\}$
is a \hr\ family of order $2nm$.
\end{enumerate}
\end{thm}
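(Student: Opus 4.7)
The plan is to verify directly the three defining properties of a \hr\ family—orthogonality, skew-symmetry, and pairwise anticommutation—for each of the listed matrices, by repeated application of the tensor-product identities $(X\otimes Y)\transpose=X\transpose\otimes Y\transpose$ and $(X\otimes Y)(Z\otimes W)=XZ\otimes YW$, combined with the specific properties of the $2\times 2$ matrices $A$, $P$, $Q$. From their explicit forms one has $A\transpose=-A$ and $AA\transpose=E_2$, while $P\transpose=P$, $Q\transpose=Q$, and $P^2=Q^2=E_2$; moreover a direct $2\times2$ calculation shows that any two of $A$, $P$, $Q$ anticommute. These are the only ``external'' inputs; everything else is formal.

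For part (1) each listed matrix has size $2n\times 2n$, so the order is correct. Skew-symmetry of $A\otimes E_n$ follows from $A\transpose=-A$, and skew-symmetry of $Q\otimes M_i$ follows from $Q\transpose=Q$ together with $M_i\transpose=-M_i$. Orthogonality reduces termwise to $AA\transpose\otimes E_n=E_{2n}$ and $Q^2\otimes M_iM_i\transpose=E_{2n}$. For anticommutation there are two pair types: $(A\otimes E_n)(Q\otimes M_i)=AQ\otimes M_i=-QA\otimes M_i=-(Q\otimes M_i)(A\otimes E_n)$ because $AQ=-QA$, and for $i\neq j$, $(Q\otimes M_i)(Q\otimes M_j)=E_2\otimes M_iM_j$ while $(Q\otimes M_j)(Q\otimes M_i)=E_2\otimes M_jM_i$, so anticommutation follows from $M_iM_j=-M_jM_i$.

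For part (2) each listed matrix has size $2nm\times 2nm$. Skew-symmetry and orthogonality are again checked termwise, using the symmetry of $P$ and $Q$, the skew-symmetry of $A$, $M_i$, $L_j$, the identities $P^2=Q^2=E_2$, and the two given \hr\ relations $M_iM_i\transpose=E_n$, $L_jL_j\transpose=E_m$. For anticommutation I would enumerate the five unordered pair types: (i) two matrices $P\otimes M_i\otimes E_m$ and $P\otimes M_j\otimes E_m$ with $i\neq j$ reduce to $M_iM_j=-M_jM_i$; (ii) two matrices $Q\otimes E_n\otimes L_j$ and $Q\otimes E_n\otimes L_k$ with $j\neq k$ reduce to $L_jL_k=-L_kL_j$; (iii) $P\otimes M_i\otimes E_m$ against $Q\otimes E_n\otimes L_j$ reduces to $PQ=-QP$; (iv) $A\otimes E_{mn}$ against $P\otimes M_i\otimes E_m$ reduces to $AP=-PA$; (v) $A\otimes E_{mn}$ against $Q\otimes E_n\otimes L_j$ reduces to $AQ=-QA$.

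The only real obstacle is clerical: one must carry all three tensor factors through each verification and enumerate the anticommutation cases without omission. No deeper idea is needed beyond the standard algebra of tensor products and the handful of $2\times 2$ identities recorded above.
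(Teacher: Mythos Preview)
Your verification is correct: all three \hr\ axioms reduce, via the tensor-product identities you state, to the $2\times 2$ anticommutation relations among $A$, $P$, $Q$ and the assumed \hr\ relations among the $M_i$ and the $L_j$, exactly as you outline. Note, however, that the paper does not supply its own proof of this statement; it is quoted from \cite[Theorem 1.6]{gs} and used as a black box, so there is no argument in the paper to compare yours against.
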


Now we state a criterion of the existence of a sequence of matrices with 
Condition \ref{cond:afr parts m}.

\begin{lemma}
\mylabel{lem:existence m}
Let $n$, $l$, and $m$ be integers with $0\leq l< n$ and $m\geq 3$.
Then the following conditions are equivalent.
\begin{enumerate}
\item
\mylabel{item:exist seq}
There is a sequence of matrices 
satisfying Condition \ref{cond:afr parts m} with respect to $n$, $l$, $m$.
\item
\mylabel{item:bottom O}
There are $(n+l)\times n$ matrices $C_1$, $C_2$
and $n\times n$ matrices $A_3$, \ldots, $A_m$ such that
$\left(
C_1;C_2;
\begin{pmatrix}A_3\\ O\end{pmatrix};\cdots
;\begin{pmatrix}A_m\\ O\end{pmatrix}
\right)$
is \afr.
\setcounter{lastenumi}{\value{enumi}}
\end{enumerate}
Moreover, if $m=3$, then the above conditions are equivalent
to the following one.
\begin{enumerate}
\setcounter{enumi}{\value{lastenumi}}
\item
\mylabel{item:exist afr}
There is an $(n+l)\times n\times 3$ \afr\ tensor.
\setcounter{lastenumi}{\value{enumi}}
\end{enumerate}
\end{lemma}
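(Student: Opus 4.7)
My plan is to establish (1)$\Leftrightarrow$(2) by chaining through an intermediate normal form using row, column, and slice operations, and, in the $m=3$ case, to verify (2)$\Leftrightarrow$(3) by a short argument on the left null space of the third slice.

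For (1)$\Rightarrow$(2), I start with a (1)-form \afr\ tensor and apply the invertible row operation that subtracts the top $n$ rows from the bottom $n$ rows of every slice. Absolute full column rank is preserved, and the slices take the new shapes $\bigl(\begin{smallmatrix}B_1&O\\O&B_0\end{smallmatrix}\bigr)$, $\bigl(\begin{smallmatrix}B_0&B_2\\-B_0&O\end{smallmatrix}\bigr)$, and $\bigl(\begin{smallmatrix}A_k\\O\end{smallmatrix}\bigr)$ for $k\geq 3$. Testing afr at $\mathbf{x}=\mathbf{e}_1$ shows that $B_0$ has full column rank $l$; pick $R\in\mathrm{GL}(n;\RRR)$ with $RB_0=\bigl(\begin{smallmatrix}E_l\\O\end{smallmatrix}\bigr)$ and apply it to the bottom $n$ rows, making the bottom $n-l$ rows identically zero across every slice. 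Deleting these zero rows (afr persists) yields an $(n+l)\times n\times(m-1)$ tensor in form (2) with $C_1=\bigl(\begin{smallmatrix}B_1&O\\O&E_l\end{smallmatrix}\bigr)$, $C_2=\bigl(\begin{smallmatrix}B_0&B_2\\-E_l&O\end{smallmatrix}\bigr)$, and the same $A_3,\ldots,A_m$.

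For (2)$\Rightarrow$(1), I would reverse the above reduction. Given an afr tensor in form (2), I apply row operations, column operations, and slice-mixings that preserve the zero-bottom structure of slices $3,\ldots,m$ to bring $(C_1,C_2)$ into the specific block shapes produced above; then, padding with $n-l$ zero rows, choosing some $B_0$ of full column rank in place of $\bigl(\begin{smallmatrix}E_l\\O\end{smallmatrix}\bigr)$, and finally undoing the initial ``top-to-bottom'' row operation returns a (1)-form \afr\ tensor. The crucial flexibility here is that condition (1) only asks for the existence of \emph{some} $B_1,B_0,B_2,A_3,\ldots,A_m$, so they need not match the data of (2). I expect the normalization of $(C_1,C_2)$ into the required shapes to be the main obstacle, and I would handle it by introducing one or two explicit intermediate normal forms (e.g.\ first cleaning up the bottom $l$ rows and then the top $n$ rows) and chaining elementary equivalences through them.

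Finally, for (2)$\Leftrightarrow$(3) when $m=3$: the implication (2)$\Rightarrow$(3) is immediate, since any (2)-form tensor is itself an $(n+l)\times n\times 3$ \afr\ tensor. For (3)$\Rightarrow$(2), let $(D_1;D_2;D_3)$ be an afr $(n+l)\times n\times 3$ tensor. The matrix $D_3$ has $n$ columns, so $\ker D_3^{\top}\subseteq\RRR^{n+l}$ has dimension at least $l$; pick any $l$-dimensional subspace $W\subseteq\ker D_3^{\top}$ and any $P\in\mathrm{GL}(n+l;\RRR)$ whose bottom $l$ rows span $W$. Then $PD_3=\bigl(\begin{smallmatrix}A_3\\O\end{smallmatrix}\bigr)$ for some $n\times n$ matrix $A_3$, and the transformed tensor $(PD_1;PD_2;PD_3)$ remains afr, giving a (2)-form presentation with $C_1:=PD_1$ and $C_2:=PD_2$.
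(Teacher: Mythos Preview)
Your arguments for $(1)\Rightarrow(2)$ and for $(2)\Leftrightarrow(3)$ when $m=3$ are correct and follow essentially the paper's route (the paper passes through the same intermediate form $\bigl(\begin{smallmatrix}B_1&O\\O&E_l\end{smallmatrix}\bigr),\bigl(\begin{smallmatrix}B_0&B_2\\-E_l&O\end{smallmatrix}\bigr)$ that you reach).

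The gap is in $(2)\Rightarrow(1)$. You propose to bring the bottom $l$ rows of $(C_1;C_2)$ into the shape $\bigl((O,E_l);(-E_l,O)\bigr)$ using only row operations, column operations, and structure-preserving slice mixings. But the operations you allow, restricted to those bottom rows, amount precisely to the $\GL(l)\times\GL(n)\times\GL(2)$ action on the $l\times n\times 2$ tensor $({}^{n<}C_1;{}^{n<}C_2)$, and not every such tensor lies in the orbit of $\bigl((O,E_l);(-E_l,O)\bigr)$. Concretely, take $n=4$, $l=1$, $m=3$, $C_1=\bigl(\begin{smallmatrix}E_4\\0\end{smallmatrix}\bigr)$, $C_2=\bigl(\begin{smallmatrix}P\otimes A\\0\end{smallmatrix}\bigr)$, $A_3=A\otimes E_2$ (notation of Proposition~\ref{prop:gs15}). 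Since $(E_4;P\otimes A;A\otimes E_2)$ is absolutely nonsingular, the tensor $\bigl(C_1;C_2;\bigl(\begin{smallmatrix}A_3\\0\end{smallmatrix}\bigr)\bigr)$ is \afr\ and of form~(2), yet ${}^{n<}C_1={}^{n<}C_2=0$ and no equivalence carries $(0;0)$ to $((0,1);(-1,0))$. So your normalization step cannot succeed by equivalences alone.

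The idea you are missing is perturbation. The paper uses Theorem~\ref{thm:afr open} (the \afr\ locus is Euclidean-open) to replace the given tensor by a nearby \afr\ tensor for which the bottom-row pair $(-{}^{n<}C_2;{}^{n<}C_1)$ lies in the Zariski-open domain of the rational maps of Theorem~\ref{thm:rational map}; only then can the bottom rows be normalized to the required intermediate form. A second appeal to openness is made to ensure that a certain $n\times l$ block has independent columns before the final row operations. Without invoking openness, the chain of elementary equivalences you envisage cannot be completed.
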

\begin{proof}
We will prove that \ref{item:exist seq} and \ref{item:bottom O} are equivalent to
the following conditions.
\begin{enumerate}
\setcounter{enumi}{\value{lastenumi}}
\item
\mylabel{item:afr parts m1}
There are $n\times (n-l)$ matrices $B_1$, $B_2$, 
an $n\times l$ matrix $B_0$ and $n\times n$ matrices $A_3$, \ldots, $A_m$
such that
$\left(
\begin{pmatrix}B_1&O\\ O&B_0\end{pmatrix};
\begin{pmatrix}B_0&B_2\\ -B_0&O\end{pmatrix};
\begin{pmatrix}A_3\\ O\end{pmatrix};\cdots
;\begin{pmatrix}A_m\\ O\end{pmatrix}
\right)$
is \afr.
\item
\mylabel{item:afr parts m2}
There are $n\times (n-l)$ matrices $B_1$, $B_2$,
an $n\times l$ matrix $B_0$ and $n\times n$ matrices $A_3$, \ldots, $A_m$
such that the columns of $B_0$ are linearly independent and
$\left(
\begin{pmatrix}B_1&O\\ O&E_l\end{pmatrix};
\begin{pmatrix}B_0&B_2\\ -E_l&O\end{pmatrix};
\begin{pmatrix}A_3\\ O\end{pmatrix};\cdots
;\begin{pmatrix}A_m\\ O\end{pmatrix}
\right)$
is \afr.
\item
\mylabel{item:0 e -e 0}
There are $n\times (n-l)$ matrices $B_1$, $B_2$,
$n\times l$ matrices $A_1$, $A_2$ and $n\times n$ matrices $A_3$, \ldots, $A_m$
such that 
$\left(
\begin{pmatrix}B_1&A_1\\ O&E_l\end{pmatrix};
\begin{pmatrix}A_2&B_2\\ -E_l&O\end{pmatrix};
\begin{pmatrix}A_3\\ O\end{pmatrix};\cdots
;\begin{pmatrix}A_m\\ O\end{pmatrix}
\right)$
is \afr.
\end{enumerate}

\ref{item:exist seq}$\Longleftrightarrow$\ref{item:afr parts m1}$\Longleftarrow$\ref{item:afr parts m2}$
\Longrightarrow$\ref{item:0 e -e 0}$\Longrightarrow$\ref{item:bottom O}
are easy.
Furthermore, in the case where $m=3$,
\ref{item:bottom O}$\Longleftrightarrow$\ref{item:exist afr} is also easily verified.
For \ref{item:afr parts m1}$\Longrightarrow$\ref{item:afr parts m2},
note that if the columns of $B_0$ are linearly dependent, then
$\left(
\begin{pmatrix}B_1&O\\ O&B_0\end{pmatrix};
\begin{pmatrix}B_0&B_2\\ -B_0&O\end{pmatrix};
\begin{pmatrix}A_3\\ O\end{pmatrix};\cdots
;\begin{pmatrix}A_m\\ O\end{pmatrix}
\right)$
is not \afr.
For \ref{item:0 e -e 0}$\Longrightarrow$\ref{item:afr parts m2},
one may assume by Theorem \ref{thm:afr open} that the columns of $A_1+A_2$
are linearly independent.
Then \ref{item:afr parts m2} is deduced from row operations.
For \ref{item:bottom O}$\Longrightarrow$\ref{item:0 e -e 0},
one may assume by Theorem \ref{thm:afr open} that 
$(-{}^{n<}C_2;{}^{n<}C_1)$ is in the intersection of the domains of
$\varphi^{(1)}$ and $\varphi^{(2)}$ of Theorem \ref{thm:rational map}.
Then \ref{item:0 e -e 0} follows by Theorem \ref{thm:rational map}.
\end{proof}
In view of this result, we state the following:

\begin{definition}\rm
\label{cond:sp afr}
Let $n$, $l$ and $m$ be integers with $0\leq l<n$ and $m\geq 3$
and 
$T=(C_1;C_2;\cdots;C_m)$ an $(n+l)\times n\times m$-tensor over $\RRR$.
If $T$ is \afr\ and  ${}^{n<}C_i=O$ for $3\leq i\leq m$, we say that 
$T$ satisfies Condition \ref{cond:sp afr}.
\end{definition}
By Theorem \ref{thm:main m}  and Lemma \ref{lem:existence m}, we see the following:

\begin{cor}
\mylabel{cor:5y}
Let $m$, $n$ and $p$ be integers with $m\geq 3$ and $(m-2)n<p\leq (m-1)n$.
Set $l=(m-1)n-p$.
If there is an $(n+l)\times n\times m$-tensor with Condition \ref{cond:sp afr},
then $\trank_\RRR(m,n,p)$ contains a number larger than $p$.
\end{cor}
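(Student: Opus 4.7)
The plan is to derive this corollary as an immediate combination of Lemma \ref{lem:existence m} and Theorem \ref{thm:main m}, together with the invariance of typical rank under permutation of the three tensor axes.

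First I would translate Condition \ref{cond:sp afr} into the language of Lemma \ref{lem:existence m}. Suppose $T=(C_1;C_2;\cdots;C_m)$ is an $(n+l)\times n\times m$ tensor satisfying Condition \ref{cond:sp afr}. Then $C_1,C_2$ are $(n+l)\times n$ matrices, and for $3\leq i\leq m$ we have ${}^{n<}C_i=O$, so $C_i=\begin{pmatrix}A_i\\ O\end{pmatrix}$ for some $n\times n$ matrix $A_i$. The \afr\ hypothesis on $T$ is therefore exactly condition \ref{item:bottom O} of Lemma \ref{lem:existence m}. Consequently, by the equivalence \ref{item:bottom O}$\Longleftrightarrow$\ref{item:exist seq} in that lemma, there exists a sequence of matrices $A,A_3,\ldots,A_m$ satisfying Condition \ref{cond:afr parts m} with respect to $n$, $l$, $m$.

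Next I would apply Theorem \ref{thm:main m} directly to this sequence, which asserts that $\trank_\RRR(n,p,m)$ contains some integer strictly larger than $p$.

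Finally I would invoke the elementary observation that the rank of a 3-tensor is invariant under any permutation of its three axes: if $x=\sum_{i=1}^r \aaa_i\otimes\bbb_i\otimes\ccc_i$ then reordering the factors yields an expression of length $r$ for the corresponding tensor in the permuted space, so the minimum $r$ agrees. This induces a measure-preserving linear isomorphism between $\RRR^{n\times p\times m}$ and $\RRR^{m\times n\times p}$ that identifies tensors of equal rank, hence $\trank_\RRR(n,p,m)=\trank_\RRR(m,n,p)$. The conclusion follows. There is no real obstacle here; the work has already been done in the cited results, and this corollary is pure bookkeeping that packages the hypothesis in the form most convenient for the constructions of Section 5.
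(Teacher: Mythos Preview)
Your proof is correct and matches the paper's approach exactly: the paper simply writes ``By Theorem~\ref{thm:main m} and Lemma~\ref{lem:existence m}, we see the following'' before stating the corollary, and you have unpacked precisely those two ingredients. The permutation-invariance step $\trank_\RRR(n,p,m)=\trank_\RRR(m,n,p)$ that you spell out is left implicit in the paper (it is used again without comment in Remark~\ref{rem:more than one}).
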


\begin{remark}
\rm
\mylabel{rem:more than one}
Suppose $(m-1)(n-1)+1\leq p \leq (m-1)n$.
Then $\min(\trank_\RRR(n,p,m))=
\min(\trank_\RRR(m,n,p))=\grank_\CCC(m,n,p)=p$
by Theorem \ref{thm:Friedland} and Remark \ref{rem:grankc}.
Therefore if 
there is an 
integer larger than $p$ in $\trank_\RRR(m,n,p)$,
then there are at least two typical ranks.
\end{remark}

Here we state some basic facts which are immediately verified.
\begin{lemma}
\mylabel{lem:afr add cut}
Let $T=(A_1;A_2;\cdots;A_p)$ be 
an $l\times n\times p$ \afr\ tensor.
\begin{enumerate}
\item
\mylabel{item:add}
For any positive integer $k$,
$
\left(
\begin{pmatrix}A_1\\ O\end{pmatrix};
\begin{pmatrix}A_2\\ O\end{pmatrix};\cdots;
\begin{pmatrix}A_p\\ O\end{pmatrix}
\right)
$
is 
an $(l+k)\times n\times p$ \afr\ tensor,
where $O$ is a $k\times n$ zero matrix.
\item
\mylabel{item:cut}
For any integer $k$ with
$1\leq k\leq n-1$,
$T_{\leq k}$
is 
an $l\times k\times p$ \afr\ tensor.
\item
\mylabel{item:tensor}
For any integer $u$,
$(E_u\otimes A_1;\cdots;E_u\otimes A_p)$ is a
$ul\times un\times p$ \afr\ tensor.
\end{enumerate}
\end{lemma}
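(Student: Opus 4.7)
The plan is to verify each of the three parts directly from the rank characterization in Definition~\ref{def:afr}: a tensor $(A_1;\cdots;A_p)$ with $m\times n$ slices is \afr\ exactly when $\rank(\sum_k x_k A_k)=n$ for every nonzero $(x_1,\ldots,x_p)\in\RRR^p$. In each case, the linear combination of the new slices factors through the linear combination of the original slices, so the hypothesis transfers with essentially no work.

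For part \ref{item:add}, observe that for any $(x_1,\ldots,x_p)\in\RRR^p$,
\[
\sum_{i=1}^p x_i\begin{pmatrix}A_i\\ O\end{pmatrix}
=\begin{pmatrix}\sum_{i=1}^p x_iA_i\\ O\end{pmatrix},
\]
and padding zero rows underneath a matrix does not change its rank. Thus whenever $(x_1,\ldots,x_p)\neq\zerovec$, the rank equals $\rank(\sum_i x_iA_i)=n$, as required.

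For part \ref{item:cut}, note that $\sum_i x_i(A_i)_{\leq k}=(\sum_i x_iA_i)_{\leq k}$, i.e.\ the first $k$ columns of $\sum_i x_iA_i$. Since $\sum_i x_iA_i$ has rank $n$, all $n$ of its columns are linearly independent, so any $k\leq n-1$ of them are linearly independent too; hence the rank of the truncation is $k$. So the truncated tensor meets Definition~\ref{def:afr} with column count $k$ in place of $n$.

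For part \ref{item:tensor}, by bilinearity of the Kronecker product,
\[
\sum_{i=1}^p x_i(E_u\otimes A_i)=E_u\otimes\Bigl(\sum_{i=1}^p x_iA_i\Bigr),
\]
and the standard identity $\rank(E_u\otimes M)=u\cdot\rank(M)$ gives rank $u\cdot n=un$ whenever $(x_1,\ldots,x_p)\neq\zerovec$. Since the new slices have $un$ columns, this is exactly the \afr\ condition. There is no real obstacle here; the only point worth mentioning is the Kronecker-rank formula used in \ref{item:tensor}, which is a well-known consequence of $\rank(A\otimes B)=\rank(A)\rank(B)$.
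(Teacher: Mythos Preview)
Your proof is correct. The paper does not give a proof of this lemma at all, merely prefacing it with ``some basic facts which are immediately verified''; your direct verification of each part via the rank characterization in Definition~\ref{def:afr} is exactly the kind of immediate check the authors had in mind.
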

%
%
%
%
\begin{cor}
\mylabel{cor:suf exist seq}
Let $n$, $l$ and $m$ be integers with $0\leq l<n$ and $m\geq 3$.
\begin{enumerate}
\item
\mylabel{item:ans exist seq}
If there is an $n\times n\times m$ absolutely nonsingular tensor,
then there is an $(n+l)\times n\times m$ tensor 
with Condition \ref{cond:sp afr}.
\item
If there is an $(n+l)\times n\times m$ tensor 
with  Condition \ref{cond:sp afr}, then
there is an $(n+l')\times n\times m$ tensor 
with Condition \ref{cond:sp afr}
for any $l'$ with $l<l'<n$.
\end{enumerate}
\end{cor}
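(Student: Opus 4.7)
\medskip

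The plan is to derive both parts directly from Lemma \ref{lem:afr add cut}\ref{item:add}, which lets us pad the bottom of every slice of an \afr\ tensor with rows of zeros while preserving the \afr\ property.

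For part \ref{item:ans exist seq}, I would start from an $n\times n\times m$ absolutely nonsingular tensor $(A_1;\cdots;A_m)$; by definition this tensor is \afr\ (with $l=n$ in Definition \ref{def:afr}). Applying Lemma \ref{lem:afr add cut}\ref{item:add} with $k=l$, I would attach an $l\times n$ zero block below each slice to obtain the $(n+l)\times n\times m$ \afr\ tensor
\[
T=\left(\begin{pmatrix}A_1\\ O\end{pmatrix};\begin{pmatrix}A_2\\ O\end{pmatrix};\cdots;\begin{pmatrix}A_m\\ O\end{pmatrix}\right).
\]
Every slice of $T$ has its bottom $l$ rows equal to zero, so in particular ${}^{n<}C_i=O$ for $3\le i\le m$, and $T$ satisfies Condition \ref{cond:sp afr}.

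For part~(2), suppose $T=(C_1;\cdots;C_m)$ is an $(n+l)\times n\times m$ tensor satisfying Condition \ref{cond:sp afr}, and let $l'$ be an integer with $l<l'<n$. Applying Lemma \ref{lem:afr add cut}\ref{item:add} again with $k=l'-l$, I would define
\[
C_i' \define \begin{pmatrix} C_i \\ O_{(l'-l)\times n} \end{pmatrix}\qquad (i=1,\ldots,m),
\]
so that $T'\define(C_1';\cdots;C_m')$ is an $(n+l')\times n\times m$ \afr\ tensor. Since ${}^{n<}C_i=O$ already for $3\le i\le m$, we have ${}^{n<}C_i'=O$ for the same range of $i$, confirming that $T'$ satisfies Condition \ref{cond:sp afr}.

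There is no substantive obstacle here; the content of the corollary is really just the observation that Condition \ref{cond:sp afr} is preserved under adding rows of zeros at the bottom, which follows from the row-padding part of Lemma \ref{lem:afr add cut}.
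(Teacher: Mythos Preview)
Your proof is correct and is exactly the argument the paper has in mind: the corollary is stated immediately after Lemma~\ref{lem:afr add cut} with no separate proof, and both parts are just the row-padding statement \ref{item:add} applied to an \afr\ tensor whose last slices already have zero bottom rows. The only cosmetic wrinkle is the boundary case $l=0$ in part~\ref{item:ans exist seq}, where Lemma~\ref{lem:afr add cut}\ref{item:add} (which assumes $k$ positive) is not needed since the original \ans\ tensor itself already satisfies Condition~\ref{cond:sp afr} vacuously.
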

By Corollaries \ref{cor:5y} and \ref{cor:suf exist seq}, we see the following:
\begin{cor}
\mylabel{cor:ans case}
Let $m$ and $n$ be integers with $3\leq m\leq \rho(n)$.
Then $\trank_\RRR(m,n,p)$ contains a number larger than $p$
for any $p$ with $(m-2)n<p\leq (m-1)n$.
\end{cor}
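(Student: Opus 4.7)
The statement follows by assembling the machinery developed in the preceding sections, so the plan is to verify that the hypothesis of Corollary \ref{cor:5y} is satisfied under the hypotheses on $m$ and $n$. First, given $p$ with $(m-2)n < p \leq (m-1)n$, I would set $l = (m-1)n - p$ and observe that $0 \leq l < n$, which is the range needed to apply the results involving Condition \ref{cond:sp afr}.

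Next, since $m \leq \rho(n)$, Theorem \ref{thm:ssm22} guarantees the existence of an $n \times n \times m$ absolutely nonsingular tensor. Applying Corollary \ref{cor:suf exist seq}\ref{item:ans exist seq} converts this into an $(n+l) \times n \times m$ tensor satisfying Condition \ref{cond:sp afr}. Finally, Corollary \ref{cor:5y} then immediately yields that $\trank_\RRR(m,n,p)$ contains a number strictly larger than $p$.

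In short, the proof is an unpacking chain: bound on $l$, existence of an absolutely nonsingular tensor via the Hurwitz-Radon bound, promotion to a tensor with Condition \ref{cond:sp afr}, and finally the rank-shift provided by Corollary \ref{cor:5y}. There is no real obstacle here since all the heavy lifting (the open-set argument for \afr\ tensors, the rational-map normalization, and the explicit construction of Hurwitz-Radon families) has been done in the earlier sections; the only point worth checking carefully is the arithmetic on $l$ to ensure it lies in the admissible range $0 \leq l < n$ required by Definition \ref{cond:sp afr} and Corollary \ref{cor:suf exist seq}.
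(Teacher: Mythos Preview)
Your proposal is correct and follows exactly the route the paper takes: the paper's own proof is the single line ``By Corollaries \ref{cor:5y} and \ref{cor:suf exist seq}'', and you have simply unpacked this by making explicit the use of Theorem \ref{thm:ssm22} to supply the absolutely nonsingular tensor required in Corollary \ref{cor:suf exist seq}\ref{item:ans exist seq} and the verification that $l=(m-1)n-p$ lies in $[0,n)$.
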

Therefore, we see the following result by
Remark \ref{rem:more than one}.
\begin{thm}
\mylabel{thm:ans case}
Let $m$, $n$ and $p$ be integers with $3\leq m\leq \rho(n)$ and 
$(m-1)(n-1)+1\leq p\leq (m-1)n$.
Then 
there are at least two typical ranks of $m\times n\times p$ tensors
over $\RRR$.
\end{thm}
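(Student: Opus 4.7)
The theorem should follow almost mechanically by combining Corollary \ref{cor:ans case} with Remark \ref{rem:more than one}, so my plan is just to verify that the hypotheses of the two line up and then to assemble them.

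First, to identify $p$ itself as a typical rank (and in fact as the smallest one), I would invoke Remark \ref{rem:more than one}: under the standing hypothesis $(m-1)(n-1)+1\leq p\leq (m-1)n$, Remark \ref{rem:grankc} gives $\grank_\CCC(m,n,p)=p$, and Theorem \ref{thm:Friedland}\,(3) (combined with Proposition \ref{prop:open}) then yields $p=\min(\trank_\RRR(m,n,p))$.

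Second, to produce a typical rank strictly larger than $p$, I would apply Corollary \ref{cor:ans case}, whose hypothesis is $3\leq m\leq \rho(n)$ and $(m-2)n<p\leq (m-1)n$. The only inequality that still needs checking is $p>(m-2)n$; this follows from $p\geq (m-1)(n-1)+1$ via the identity
$$(m-1)(n-1)+1-(m-2)n=n-m+2,$$
which is strictly positive because $m\leq \rho(n)\leq n$ forces $n\geq m$.

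Combining the two, $\trank_\RRR(m,n,p)$ contains both $p$ and some integer strictly larger than $p$, and hence has at least two elements. I do not anticipate any genuine obstacle: the substance of Corollary \ref{cor:ans case} was absorbed earlier, running through the Hurwitz--Radon construction of an $n\times n\times m$ absolutely nonsingular tensor (Theorem \ref{thm:ssm22}), the lifting to an $(n+l)\times n\times m$ tensor satisfying Condition \ref{cond:sp afr} via the first part of Corollary \ref{cor:suf exist seq}, and finally Corollary \ref{cor:5y}, which itself rests on Theorem \ref{thm:main m}. The present theorem is simply the packaging of these ingredients together with the lower-bound identification of $p$ as a typical rank.
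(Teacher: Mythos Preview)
Your proposal is correct and follows exactly the route the paper takes: the theorem is stated immediately after Corollary \ref{cor:ans case} with the one-line justification ``by Remark \ref{rem:more than one}.'' Your added verification that $(m-1)(n-1)+1>(m-2)n$ (via $\rho(n)\leq n$) is a useful gap-fill that the paper leaves implicit, but otherwise the argument is identical.
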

We also obtain the following:
\begin{thm}
\mylabel{thm:misc cases}
Let $m$, $n$ be integers with $m\leq n$.
Set $p=(m-1)(n-1)+1$.
Then there are at least two typical ranks of $m\times n\times p$
tensors over $\RRR$  in the following cases.
\begin{enumerate}
\item
\mylabel{item:m=3}
$m=3$, $n\equiv 3\pmod 4$.
\item
\mylabel{item:m=4}
$m=4$, $n\equiv 2\pmod 4$.
\item
\mylabel{item:m=6}
$m=6$, $n\equiv 4\pmod 8$.
\item
\mylabel{item:m=10}
$m=10$, $n\equiv 24\pmod {32}$.
\end{enumerate}
\end{thm}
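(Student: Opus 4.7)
The plan is to apply Corollary \ref{cor:5y} together with Remark \ref{rem:more than one}, which reduces each case to exhibiting an $(n+l)\times n\times m$ tensor satisfying Condition \ref{cond:sp afr}, where $l = m-2 = (m-1)n - p$. An arithmetic check shows that in each of the four cases $\rho(n) = m - 2 = l$, that $\rho(l) = l$ (since $l \in \{1, 2, 4, 8\}$), and that the congruence hypothesis on $n$ forces $N \mid (n+l)$ and $\rho(n+l) \geq m$, where $N$ is the smallest integer with $\rho(N) \geq m$; explicitly $N = 4, 4, 8, 32$ in cases \ref{item:m=3}--\ref{item:m=10}.

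For case \ref{item:m=3}, the equivalence in Lemma \ref{lem:existence m} reduces the task to producing an $(n+1)\times n\times 3$ \afr\ tensor. Since $4 \mid (n+1)$, we have $\rho(n+1) \geq 4 \geq 3$, so by Theorem \ref{thm:ssm22} there is an $(n+1)\times(n+1)\times 3$ \ans\ tensor $T$, and $T_{\leq n}$ is the required \afr\ tensor by Lemma \ref{lem:afr add cut}.

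For cases \ref{item:m=4}, \ref{item:m=6}, \ref{item:m=10}, I would construct an $(n+l)\times(n+l)\times m$ \ans\ tensor whose slices $T_3, \ldots, T_{m-1}$ are block diagonal with respect to the $(n, l)$-partition of $n+l$, and with $T_m = E_{n+l}$. The core step is to build a \hr\ family $\{M_1, \ldots, M_{m-1}\}$ of order $N$ in which $M_3, \ldots, M_{m-1}$ are block diagonal in the $(N-l, l)$ partition of $N$: for $m=4$ one takes $M_3 = Q \otimes A$ inside the family $\{A \otimes E_2, P \otimes A, Q \otimes A\}$ from Proposition \ref{prop:gs15}; for $m=6$ one selects the three block-diagonal members $E_2 \otimes A \otimes E_2$, $E_2 \otimes P \otimes A$, $Q \otimes Q \otimes A$ from the order-8 family in Proposition \ref{prop:gs15}; and for $m=10$ one iterates Theorem \ref{thm:gs16} twice to get a size-9 \hr\ family of order 32, in which the seven matrices $Q \otimes Q \otimes M_j$ (for $M_j$ in the order-8 family) are block diagonal in the asymmetric $(24, 8)$ partition because $Q \otimes Q = \diag(1, -1, -1, 1)$ produces four $8 \times 8$ diagonal blocks aligning with the top $24 = 8 + 8 + 8$ and the bottom $l = 8$.

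Setting $T_i = E_{(n+l)/N} \otimes M_i$ for $i < m$ and $T_m = E_{n+l}$, the tensoring with $E_{(n+l)/N}$ preserves both the \hr\ relations and the block-diagonal structure for $i \geq 3$, so $(T_1; T_2; \cdots; T_m)$ is $(n+l)\times(n+l)\times m$ \ans\ by Lemma \ref{lem:hr to ans}. Truncating via Lemma \ref{lem:afr add cut} produces the required $(n+l)\times n\times m$ \afr\ tensor, and ${}^{n<}(T_i)_{\leq n} = O$ for $i \geq 3$ follows from the block diagonality, so Condition \ref{cond:sp afr} holds. The main obstacle is case \ref{item:m=10}: one must track the iterated Theorem \ref{thm:gs16} construction carefully to verify that seven of the nine resulting \hr\ matrices of order 32 are block diagonal in the asymmetric $(24, 8)$ partition (rather than in the symmetric $(16, 16)$ partition), and the residue condition $n \equiv 24 \pmod{32}$ is chosen precisely so that $N = 32$ divides $n + l$.
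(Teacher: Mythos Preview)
Your proposal is correct and follows essentially the same strategy as the paper: reduce via Corollary~\ref{cor:5y} and Remark~\ref{rem:more than one} to constructing an $(n+l)\times n\times m$ tensor with Condition~\ref{cond:sp afr}, obtained by truncating an $(n+l)\times(n+l)\times m$ absolutely nonsingular tensor built from a Hurwitz--Radon family in which the slices $3,\ldots,m$ are block diagonal in the $(n,l)$ partition. The only differences are cosmetic: for $m=3$ you invoke the equivalence \ref{item:exist afr} of Lemma~\ref{lem:existence m} rather than writing out the family explicitly, and for $m=10$ you iterate Theorem~\ref{thm:gs16}(1) twice (yielding the seven block-diagonal matrices $Q\otimes Q\otimes M_j$) whereas the paper applies Theorem~\ref{thm:gs16}(2) once (yielding $Q\otimes E_2\otimes L_j$); both produce seven order-$32$ Hurwitz--Radon matrices that are diagonal in $8\times 8$ blocks and hence block diagonal in the $(24,8)$ partition.
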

\begin{proof}
We use the notation of Proposition \ref{prop:gs15} and the paragraph preceding it.

\ref{item:m=3}
Set $n+1=4u$,
$M_1=A\otimes E_2$ and $M_2=P\otimes A$.
Then by Proposition \ref{prop:gs15} \ref{item:4} and Lemma \ref{lem:subfam},
$\{M_1, M_2\}$ is a \hr\ family of order $4$.
Therefore by Lemmas \ref{lem:hr to ans} and \ref{lem:afr add cut} \ref{item:tensor},
 we see that
$T=(E_u\otimes M_1;E_u\otimes M_2;E_{4u})$ is an $(n+1)\times (n+1)\times 3$
\ans\ tensor.
So we see, by Lemma \ref{lem:afr add cut} \ref{item:cut},
 that $T_{\leq n}$ is an $(n+1)\times n\times 3$-tensor 
which satisfies Condition \ref{cond:sp afr}.

Since $(m-1)n-p=2n-(2(n-1)+1)=1$, we see by Corollary \ref{cor:5y}
that $\trank_\RRR(m,n,p)$ contains a number larger than $p$.
So by Remark \ref{rem:more than one}, $\trank_\RRR(m,n,p)$ contains at least two numbers.

\ref{item:m=4}
Set $n+2=4u$, $M_1=A\otimes E_2$, $M_2=P\otimes A$ and $M_3=Q\otimes A$.
Then by Proposition \ref{prop:gs15} \ref{item:4}, we see that 
$\{M_1, M_2, M_3\}$ is a \hr\ family of order $4$.
Therefore, by Lemmas \ref{lem:hr to ans} and \ref{lem:afr add cut} \ref{item:tensor},
 we see that
$T=(E_u\otimes M_1;E_u\otimes M_2;E_u\otimes M_3;E_{4u})$ is an
$(n+2)\times (n+2)\times 4$ \ans\ tensor.
So by Lemma \ref{lem:afr add cut} \ref{item:cut}, $T_{\leq n}$ is \afr.
Moreover, since $(2,1)$-entry of $Q$ is $0$, we see that 
${}^{n<}(E_u\otimes Q\otimes A)_{\leq n}$ is a zero matrix.
Therefore, $T$ satisfies Condition \ref{cond:sp afr}.

Since $(m-1)n-p=3n-(3(n-1)+1)=2$, we see, by  Corollary \ref{cor:5y},
that $\trank_\RRR(m,n,p)$ contains a number larger than $p$.
So by Remark \ref{rem:more than one}, $\trank_\RRR(m,n,p)$ contains at least two numbers.

\ref{item:m=6}
Set $n+4=8u$, $M_1=P\otimes Q\otimes A$, $M_2=A\otimes P\otimes Q$,
$M_3=E_2\otimes A\otimes E_2$, $M_4=E_2\otimes P\otimes A$ and
$M_5=Q\otimes Q\otimes A$.
Then by Proposition \ref{prop:gs15} \ref{item:8} and Lemma \ref{lem:subfam}, 
we see that 
$\{M_1,\ldots, M_5\}$ is a \hr\ family of order $8$.
Therefore, by Lemmas \ref{lem:hr to ans} and \ref{lem:afr add cut} \ref{item:tensor},
$T=(E_u\otimes M_1;\cdots;E_u\otimes M_5;E_{8u})$ is an
$(n+4)\times (n+4)\times 6$ \ans\ tensor.
So by Lemma \ref{lem:afr add cut} \ref{item:cut}, we see that $T_{\leq n}$ is an
$(n+4)\times n\times 6$ \afr\ tensor.
Moreover, since $(2,1)$-entries of $E_2$ and $Q$ are $0$, we see that
$T_{\leq n}$ satisfies Condition \ref{cond:sp afr}.

Since $(m-1)n-p=5n-(5(n-1)+1)=4$, we see, by  Corollary \ref{cor:5y},
that $\trank_\RRR(m,n,p)$ contains a number larger than $p$.
So by Remark \ref{rem:more than one}, $\trank_\RRR(m,n,p)$ contains at least two numbers.

\ref{item:m=10}
Set $n+8=32u$.
By Proposition \ref{prop:gs15} \ref{item:8}, we see that there is a \hr\ family 
$\{L_1, \ldots, L_7\}$ of order $8$.
Since $\{A\}$ is a \hr\ family of order $2$, we see by Theorem \ref{thm:gs16} that
$\{P\otimes A\otimes E_8, Q\otimes E_2\otimes L_1, \ldots, Q\otimes E_2\otimes L_7,
A\otimes E_{16}\}$ is a \hr\ family of order $32$.
Therefore, by Lemmas \ref{lem:hr to ans} and \ref{lem:afr add cut} \ref{item:tensor},
$T=(E_u\otimes A\otimes E_{16}; E_u\otimes P\otimes A\otimes E_8;
E_u\otimes Q\otimes E_2\otimes L_1;\cdots ;E_u\otimes Q\otimes E_2\otimes L_7;E_{32u})$
is an
$(n+8)\times (n+8)\times 10$ \ans\ tensor.
So by Lemma \ref{lem:afr add cut} \ref{item:cut}, we see that $T_{\leq n}$ is an
$(n+8)\times n\times 10$ \afr\ tensor.
Moreover, since $(4,1)$, $(4,2)$ and $(4,3)$-entries of $Q\otimes E_2$ are $0$, we see that
$T_{\leq n}$ satisfies Condition \ref{cond:sp afr}.

Since $(m-1)n-p=9n-(9(n-1)+1)=8$, we see, by  Corollary \ref{cor:5y},
that $\trank_\RRR(m,n,p)$ contains a number larger than $p$.
So by Remark \ref{rem:more than one}, $\trank_\RRR(m,n,p)$ contains at least two numbers.
\end{proof}


\begin{thebibliography}{CBDC}


\bibitem[CGG]{cgg}
Catalisano, M.V., Geramita, A.V. and Gimigliano, A.,
On the ideals of secant varieties to certain rational varieties,
J. Algebra {\bf 319} 1913--1931, (2008).


\bibitem[Che]{che}
Chevalley, C.,
Sch\'emas normaux; morphismes; ensembles constructibles,
S\'eminaire Henri Cartan {\bf 8} (1955-1956) expos\'e 7



\bibitem[CBDC]{cbdc}
Comon, P., ten Berge, J.M.F., DeLathauwer, L. and Castaing, J.,
Generic and typical ranks of multi-way arrays,
Linear Algebra and its Applications
{\bf 430} 2997--3007, (2009).


\bibitem[Fri]{Friedland:2008}
Friedland, S.,
On the generic and typical ranks of 3-tensors,
Linear Algebra and its Applications
{\bf 436} 478--497, (2012).


\bibitem[GS]{gs}
Geramita, A. V. and Seberry, J.,
 Orthogonal Designs, Quadratic Forms and Hadamard Matrices, 
Lecture Notes in Pure and Applied Mathematics, 
vol. {\bf 45}
Marcel Dekker Inc., New York, (1979).


\bibitem[Har]{har}
Harris, J.,
Algebraic geometry: a first course,
GTM {\bf 133}
Springer-Verlag, (1992).


\bibitem[MSS]{mss}
Miyazaki, M.,  Sumi, T.\ and Sakata, T.,
Tensor rank determination problem,
Proceedings of the 2009 International Symposium on 
Nonlinear Theory and its Applications (NOLTA'09)
391--394, (2009).


\bibitem[Mum]{mum}
Mumford, D., 
Algebraic Geometry I: Complex Projective Varieties, 
Springer-Verlag, Berlin, (1976).

\bibitem[SMS]{sms} 
Sumi, T., Miyazaki, M.\ and Sakata, T.,
Rank of 3-tensors with 2 slices and Kronecker canonical forms,
Linear Algebra and its Applications
{\bf 431} 1858--1868, (2009).

\bibitem[SSM]{ssm}
Sumi, T., Sakata, T.\ and Miyazaki, M.,
Typical ranks for $m\times n\times (m-1)n$ tensors with
$m\leq n$,
Linear Algebra and its Applications 
{\bf 438} 953--958, (2013).


\bibitem[BK]{Berge and Kiers 1999}
	Ten Berge, J.M.F.\ and Kiers, H.A.L.,
	{\rm Simplicity of core arrays in three-way principal component analysis
	and the typical rank of $p\times q\times 2$ arrays},
Linear Algebra and its Applications
{\bf 294} 169--179, (1999).

\bibitem[Ber1]{Berge 2000}
	Ten Berge, J.M.F.,
	{\rm The typical rank of tall three-way arrays},
	Psychometrika {\bf65} 525--532, (2000).

\bibitem[Ber2]{Berge 2004}
	Ten Berge, J.M.F.,
	{\rm Partial uniqueness in CANDECOMP/PARAFAC},
	J. Chemometrics {\bf18} 12--16, (2004).


\bibitem[Ber3]{Berge 2011}
	Ten Berge, J.M.F.,
	{\rm 
Simplicity and typical rank results for three-way arrays},
Psychometrika {\bf 76} 3--12, (2011).



\end{thebibliography}
\end{document}